\let\oldeq\equation{}\def\equation{\par\vspace{-\parskip}\oldeq}
\def\be{\begin{equation}}
\def\ee{\end{equation}}
\def\bea{\begin{eqnarray}}
\def\eea{\end{eqnarray}}
\def\bt{\begin{theorem}}
\def\et{\end{theorem}}
\def\bl{\begin{lemma}}
\def\el{\end{lemma}}
\def\br{\begin{remark}}
\def\er{\end{remark}}
\def\bp{\begin{proposition}}
\def\ep{\end{proposition}}
\def\bc{\begin{corollary}}
\def\ec{\end{corollary}}
\def\bd{\begin{definition}}
\def\ed{\end{definition}}
\def\vp{\varphi}
\def\non{\nonumber }
\def\ub{\mathbf{u}}
\def\baru{\overline{\mathbf{u}}}
\def\btau{\boldsymbol{\tau}}
\def\BH{\mathbf{H}}
\def\ar1{\varphi_h^{k+1}}
\def\var0{\varphi_h^{k}}
\def\bu1{\overline{\mathbf{u}}_h^{k+1}}
\def\E{\mathbf{E}}
\newcommand{\ignore}[1]{}
\begin{document}

%%%%%%%%%%%%%%%%%%%%%%%%%%%%%%%%%%%%%%%%%%%%%%%%%%%%%%%%%%%%%%
\title{Uniquely solvable and energy stable decoupled numerical schemes for the  Cahn-Hilliard-Stokes-Darcy system for two-phase flows in karstic geometry\thanks{Supported in part by an NSF grant {DMS-1312701}.
Wenbin Chen is supported by the NSFC (1119099, 9113004), and a 111 project (B08018). }}

\author{Wenbin Chen         \and
        Daozhi Han \and Xiaoming Wang %etc.
}

\titlerunning{Decoupled Energy-stable Numerical Schemes for CHSD}
\authorrunning{W. Chen, D. Han and X. Wang} % if too long for running head

\institute{W. Chen  \at
              School of Mathematical Sciences, Fudan University,  Shanghai 200433, China \\
              \email{wbchen@fudan.edu.cn}           %  \\
%             \emph{Present address:} of F. Author  %  if needed
           \and
           D. Han \at
              Department of Mathematics, Indiana University, Bloomington, IN 47405, U.S.A.\\
               \email{djhan@iu.edu}
            \and
            X. Wang \at
              Department of Mathematics, Florida State University, Tallahassee, FL 32306-4510, U.S.A. \\
              \email{wxm@math.fsu.edu} 
}

\date{Received: date / Accepted: date}
% The correct dates will be entered by the editor

\maketitle

\begin{abstract}
We propose and analyze two novel decoupled numerical schemes for solving the Cahn-Hilliard-Stokes-Darcy (CHSD) model for two-phase flows in karstic geometry. In the first numerical scheme, we explore a fractional step method (operator splitting) to decouple the phase-field (Cahn-Hilliard equation) from the velocity field (Stokes-Darcy fluid equations). To further decouple the Stokes-Darcy system, we introduce a first order pressure stabilization term in the Darcy solver in the second numerical scheme so that the Stokes system is decoupled from the Darcy system and hence the CHSD system can be solved in a fully decoupled manner. We show that both decoupled numerical schemes are uniquely solvable, energy  stable, and mass conservative. Ample numerical results are presented to demonstrate the accuracy and efficiency of our schemes.

\keywords{Cahn-Hilliard-Stokes-Darcy system \and two phase flow \and karstic geometry \and interface boundary conditions \and diffuse interface model}
% \PACS{PACS code1 \and PACS code2 \and more}
 \subclass{35K61 \and 76T99 \and 76S05 \and 76D07}
\end{abstract}

%\cortext[cor1]{Corresponding author, tel: (850)644-2202, fax: (850)644-4053.}
%\fntext[fn1]{Supported by an NSF grant and  Florida State University.}
%\fntext[fn3]{Supported by NSF of China {No.11371098} and Zhuo Xue program of Fudan University.}

%%%%%%%%%%%%%%%%%%%%%%%%%%%%%%%%%%%%%%%%%%%%%%%%%%%%%%%%%%%%%

\section{Introduction}
\setcounter{equation}{0}

Many natural and engineering applications involve multiphase flows in karstic geometry, i.e.,  geometry with both conduit (or vug) and porous media \cite{HSW2014}. Such kind of problems are intrinsically difficulty due to the multi-scale multi-physics nature.  In \cite{HSW2014}, the authors utilized Onsager's extremum principle to derive a diffuse interface model, the so-called Cahn-Hilliard-Stokes-Darcy system (CHSD), for two-phase incompressible  flows with matched densities in the karstic geometry. Existence and weak-strong uniqueness of  weak solutions for the CHSD system have been proved recently in \cite{HWW2014}.  
For complex systems like the CHSD model, efficient and accurate numerical schemes are highly desirable. There are several challenges associated with the system. First,  due to the relative slow motion of fluid in porous media, long time simulations are needed in order to capture physically important phenomena. In particular, we would like to have numerical schemes that inherit, with modification if needed, the energy law of the continuous model. Second, the CHSD model, similar to all phase field model with a sharp interface limit,  is stiff due to the existence of relatively steep transition regions. This stiffness leads to a severe time-step restriction if one adopts classical explicit time stepping. Third, the CHSD system involves at least three coupled physical processes: the dynamics of the phase field variable (governed by the Cahn-Hilliard equation), the fluid flow in the conduit (governed by the Stokes equation), and the fluid flow in the porous media (governed by the Darcy system). Efficient and accurate numerical schemes for each of the sub-models do exist. Therefore, it would be highly desirable to have numerical schemes that decouple these subsystems while maintaining the long time stability. Such decoupled schemes would reduce the complexity of the computation and allow the possibility of the utilization of legacy codes.
In this paper, we introduce and analyze two novel decoupled schemes for the CHSD system. In particular, we show that both schemes are uniquely solvable and enjoy appropriate discrete energy law which ensure the long time stability.
So far as we know, these are the first set of uniquely solvable and energy stable decoupled schemes for computing two-phase flows in karstic geometry.

To fix the notation, let us assume that
the two-phase flows are confined in a bounded connected domain $\Omega \subset \mathbb{R}^d$ ($d=2,3$) with sufficiently smooth boundary. The unit outer normal at $\partial \Omega$ is denoted by $\mathbf{n}$. The domain $\Omega$ is partitioned into two non-overlapping regions such that $\overline{\Omega}=\overline{\Omega}_c\cup \overline{\Omega}_m$ and $\Omega_c \cap \Omega_m= \emptyset$, where $\Omega_c$ and $\Omega_m$ represent the underground
conduit (or vug) and the porous matrix region, respectively.
We denote $\partial \Omega_c$ and $\partial \Omega_m$ the boundaries of the conduit and the matrix part, respectively. Both $\partial \Omega_c$ and  $\partial \Omega_m$ are assumed to be Lipschitz continuous. The interface between the two parts (i.e., $\partial \Omega_c\cap \partial \Omega_m$) is denoted by $\Gamma_{cm}$, on which $\mathbf{n}_{cm}$ denotes the
unit normal to $\Gamma_{cm}$ pointing from the conduit part to the matrix part. Then we denote $\Gamma_c=\partial \Omega_c\backslash \Gamma_{cm}$ and $\Gamma_m=\partial \Omega_m\backslash \Gamma_{cm}$ with   $\mathbf{n}_c, \mathbf{n}_m$ being the unit outer normals to
$\Gamma_{c}$ and $\Gamma_{m}$.
On the conduit/matrix interface $\Gamma_{cm}$, we denote by $\{\btau_i\}$ $(i=1,...,d-1)$ a local orthonormal
basis for the tangent plane to $\Gamma_{cm}$.  A two dimensional geometry is illustrated in Figure \ref{domain}.
\begin{figure}[ht]
  \begin{center}
    \includegraphics[width=0.5\textwidth]{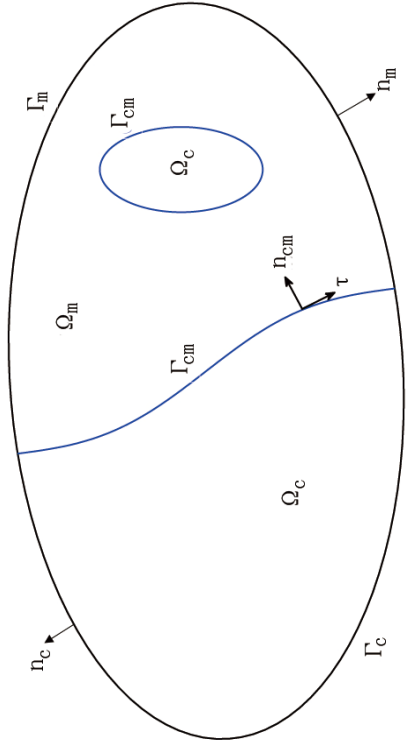} % original jpg
  \end{center}
  \caption{Schematic illustration of the domain in 2D}
  \label{domain}
\end{figure}
\\
\noindent In the sequel, the subscript $m$ (or $c$) emphasizes that the variables are for the matrix part (or the conduit part). We denote by $\ub$ the mean velocity of the fluid mixture and $\vp$ the phase function related to the concentration of the fluid (volume fraction). The following convention will be assumed throughout the paper
$$
\ub|_{\Omega_m}=\ub_m, \ \ \ub|_{\Omega_c}=\ub_c,\ \ \varphi|_{\Omega_m}=\varphi_m, \ \ \varphi|_{\Omega_c}=\varphi_c.
$$

\textbf{Governing PDE system}. We shall consider the following generalized Cahn-Hilliard-Stokes-Darcy system with time derivatives retained in the Stokes-Darcy system for generality:
\begin{eqnarray}
&&\rho_0\partial_t \ub_c=\nabla\cdot\mathbb{T}(\ub_c,P_c)-\varphi_c \nabla \mu_c,\ \ \mbox{in}\ \Omega_c,\label{HSCH-NSCH1}\\
&&\nabla\cdot\ub_c=0,\ \ \mbox{in}\ \Omega_c,\label{HSCH-NSCH2}\\
&&\partial_t\varphi_c+\nabla \cdot (\ub_c \varphi_c)={\rm div}({\rm M}(\vp_c)\nabla\mu_c),\ \ \mbox{in}\ \Omega_c,\label{HSCH-NSCH3}\\
&&\frac{\rho_0}{\chi}\partial_t \ub_m+\nu(\varphi_m)\Pi^{-1}\ub_m=-\left(\nabla P_m+ \varphi_m \nabla\mu_m\right),\ \ \mbox{in}\ \Omega_m,\label{HSCH-NSCH4}\\
&&\nabla\cdot\ub_m=0,\ \ \mbox{in}\ \Omega_m, \label{HSCH-NSCH5}\\
&&\partial_t\varphi_m+\nabla \cdot (\ub_m\varphi_m)={\rm div}({\rm M}(\vp_m)\nabla\mu_m),\ \ \mbox{in}\ \Omega_m, \label{HSCH-NSCH6}
\end{eqnarray}
where the chemical potentials $\mu_c, \mu_m$ are given by
\be
 \mu_j=\gamma[\frac{1}{\epsilon}(\vp_j^3-\vp_j)-\epsilon\Delta\varphi_j], \quad \quad j\in\{c, m\} \label{chemp}.
 \ee
The Cauchy stress tensor $\mathbb{T}$ is given by
$$\mathbb{T}(\ub_c,p_c) = 2\nu(\vp_c)\mathbb{D}(\ub_c)-P_c\mathbb{I}$$
where $\mathbb{D}(\ub_c)=\frac{1}{2}(\nabla \ub_c + \nabla^T \ub_c)$ is the  rate of strain tensor and $\mathbb{I}$ is the $d\times d$ identity matrix.
Here $\rho_0$ represents the fluid density, $\chi$ is the porosity, $\nu$ is the viscosity, $\Pi$ is the permeability matrix, the parameter $\gamma>0$ is related to the surface tension.    The  mobility of the CHSD model is denoted by  $\mathrm{M}$. Throughout, we assume that the viscosity $\nu$ and mobility $M$  are  suitable functions of the phase function $\vp$ such that $0<c\leq \nu, M\leq C$ for positive constants $c$ and $C$. We remark that the mobility function $M$ should scale like $\epsilon^2$ to recover the sharp interface model in the limit $\epsilon \rightarrow 0$, cf. \cite{MPCMC2013}.
In Eq. \eqref{HSCH-NSCH4}, $\Pi$ is a $d\times d$ matrix standing for permeability of the porous media. It is related to the hydraulic conductivity tensor of the porous medium $\mathbb{K}$ through the relation $\Pi=\frac{\nu\mathbb{K}}{\rho_0 g}$. In this manuscript,  $\mathbb{K}$ is assumed to be a bounded, symmetric and uniformly positive definite matrix.
We also adopt the convention that $\Pi$ in the denominator would be the same as multiplying the numerator by the inverse of $\Pi$ on the left.

The CHSD System   is subject to the following boundary and interface boundary conditions. \\
\noindent \textbf{Boundary conditions on $\Gamma_c$ and $\Gamma_m$}:
\begin{eqnarray}
&& \ub_c=\mathbf{0},\quad \frac{\partial\varphi_c}{\partial \mathbf{n}_c}=\frac{\partial\mu_c}{\partial \mathbf{n}_c}=0, \qquad \text{on}\ \Gamma_c,\label{IBC0}\\
&& \ub_m\cdot\mathbf{n}_m=0,\quad \frac{\partial\varphi_m}{\partial \mathbf{n}_m}=\frac{\partial\mu_m}{\partial \mathbf{n}_m}=0,\qquad \text{on}\ \Gamma_m,\label{IBC1}
\end{eqnarray}

\noindent \textbf{Interface conditions on $\Gamma_{cm}$}: 
\begin{eqnarray}
 &&\varphi_m =\varphi_c, \quad \frac{\partial \vp_m}{\partial \mathbf{n}_{cm}}=\frac{\partial \vp_c}{\partial \mathbf{n}_{cm}}, \mbox{on}\ \Gamma_{cm},\label{IBCi1}\\
&&\mu_m=\mu_c,\quad M(\varphi_m)\frac{\partial \mu_m}{\partial \mathbf{n}_{cm}}=M(\varphi_c)\frac{\partial \mu_c}{\partial \mathbf{n}_{cm}},\ \ \mbox{on}\ \Gamma_{cm}, \label{IBCi2} \\
&&\ub_m\cdot\mathbf{n}_{cm}=\ub_c\cdot\mathbf{n}_{cm},\quad \mbox{on}\ \Gamma_{cm},\label{IBCi5}\\
&&-\mathbf{n}_{cm}\cdot(\mathbb{T}(\ub_c,P_c){\mathbf{n}_{cm}})=P_m,\ \ \mbox{on}\ \Gamma_{cm},\label{IBCi6}\\
&& -\btau_i\cdot(\mathbb{T}(\ub_c,P_c){\mathbf{n}_{cm}})=\alpha_{BJSJ}\frac{\nu(\vp_m)}{\sqrt{\text{trace}(\Pi)}}\btau_i\cdot\ub_c, i=1,..,d-1,  \mbox{on}\ \Gamma_{cm}.\label{IBCi7}
\end{eqnarray} 

We refer to \cite{HSW2014} for the detailed derivation of the CHSD model \eqref{HSCH-NSCH1}--\eqref{HSCH-NSCH6} together with the interface boundary conditions \eqref{IBCi1} -\eqref{IBCi7}.
 The last interface condition \eqref{IBCi7} is
the so-called
Beavers-Joseph-Saffman-Jones (BJSJ) condition (cf. \cite{Jones1973, Saffman1971}), where $\alpha_{BJSJ}\geq 0$ is an empirical parameter assumed to be a constant here for simplicity.   The BJSJ condition is a simplified variant of the well-known Beavers-Joseph (BJ) condition (cf. \cite{BeJo1967})
that addresses the important
issue of how the porous media affects the conduit flow at the
interface:
 %--- BJ
$$-\btau_i\cdot(2\nu\mathbb
D(\mathbf{u}_c))\mathbf{n}_{cm}=\alpha_{BJ}\frac{\nu}{\sqrt{\text{trace}(\Pi)}}\btau_i\cdot(\ub_c-\ub_m), \ \ \mbox{on}\ \Gamma_{cm}, \ i=1,...,d-1.
$$ 
Mathematically rigorous justification of this simplification under appropriate assumptions can be found in \cite{JaMi2000}.

An important feature of the CHSD system \eqref{HSCH-NSCH1}--\eqref{IBCi7} is that it obeys a dissipative energy law. We define  the total energy of the coupled system as follows:
\be
\mathcal{E}(t)= \int_{\Omega_c}\frac{\rho_0}{2}|\ub_c|^2 dx+ \int_{\Omega_m}\frac{\rho_0}{2\chi}|\ub_m|^2 dx +\gamma\int_{\Omega}\left[\frac{\epsilon}{2}|\nabla\varphi|^2+\frac{1}{\epsilon}F(\varphi)\right]dx, \label{totenergy}
\ee
with $F(\vp)=\frac1{4}(\vp^2-1)^2.$
Let $(\ub_m, \ub_c, \vp)$ be a smooth solution to the initial boundary value problem \eqref{HSCH-NSCH1}--\eqref{IBCi7}. Then $(\ub_m,  \ub_c, \vp)$ satisfies the following basic energy law:
\be
\frac{d}{dt} \mathcal{E}(t)=-\mathcal{D}(t) \le 0,\quad \forall\, t\geq 0, \label{EnergyLaw}
\ee
 where the rate of energy dissipation $\mathcal{D}$ is given by
\bea
\mathcal{D}(t)&=& \int_{\Omega_m} \nu(\varphi_m)\Pi^{-1}|\ub_m|^2dx
+\int_{\Omega_c}2\nu(\vp_c)|\mathbb{D}(\ub_c)|^2dx
\non\\
&& +\int_{\Omega}{\rm M}(\vp)|\nabla\mu(\vp)|^2dx
+\alpha_{BJSJ}\int_{\Gamma_{cm}}\frac{ \nu(\vp)}{\sqrt{{\rm trace}(\Pi)}}\sum_{i=1}^{d-1}|\ub_c\cdot\btau_i|^2 dS \label{D}
\eea
where we have assumed that the Beavers-Joseph-Saffman-Jones interface parameter $\alpha_{BJSJ}$ is a constant for simplicity. The case with variable BJSJ interface parameter, a necessity for curved interface boundary, can be treated similarly.

The CHSD system  is a complicated system that couples different dynamics in different domains (Cahn-Hilliard equation, Stokes equation, Darcy equation). Hence it is of great interest to develop decoupled numerical schemes  (for instance, domain decomposition schemes) so as to  employ legacy solvers for each individual equation and reduce computational cost. On the other hand, the CHSD system is a diffuse interface model that describes physical phenomena with large gradient in a small transition layer. For such systems, unconditionally stable numerical schemes are desirable so that the stiffness can be handled with ease. It is thus crucial to design efficient decoupled stable numerical algorithms for solving this system, which is the main focus of this article.
Another challenge associated with the CHSD is the necessity for long time simulation due to the slow flow motion in the porous media vs the fast motion in the conduit for situations such as pressure gradient driven flow. Unconditional long-time stability of the schemes becomes handy although it is not equivalent to long time accuracy. (Long-time stability of the schemes is a key ingredient in ensuring the convergence of long time statistical properties for dissipative systems \cite{Wang2016}).

  Efficient solvers for each individual equation/system are building blocks for constructing an efficient numerical algorithm of the CHSD system. Among the abundant literature, we only survey those closely related to our schemes. For the Cahn-Hilliard type equation that describes physical phenomena with large gradient in a small transition layer, a popular strategy in the temporal discretization is based on a convex-splitting of the associated energy functional, see \cite{Eyre1998}  for a first order scheme and \cite{HWWL2009, SWWW2012} for second order schemes. The convex-splitting schemes are desirable because they are unconditionally energy-stable and uniquely solvable. Thus numerical stiffness can be handled with ease. There are also unconditionally stable linear schemes  in the literature \cite{ShYa2010b, GuTi2013} where additional stabilization terms are introduced to ensure stability. These ideas (convex-splitting and stabilization) have been successfully utilized in the computation of Cahn-Hilliard fluid models, cf. \cite{KKL2004, KSW2008, ShYa2010a, Grun2013, GLL2014, HaWa2015, ShYa2015, DFW2015} for Cahn-Hilliard-Navier-Stokes models and \cite{Wise2010, CSW2013} for Cahn-Hilliard-Hele-Shaw/Brinkman models. For the single-phase Stokes-Darcy system, there are many efficient numerical solvers, see for instance \cite{CR08,DMQ2002, DiQu2003, DiQu2009, LSY2002, CGHW2010, CGW2010, CGHW2011, ChGHW2011, CGSW2013,  CR09, CR12, CGR13}.
%with possibly truncated potential functions

 On one hand, decoupled scheme is highly desirable for solving such a large system on a moderate computer. In addition to the apparent efficiency advantage, decoupling the computation of the system would allow the application of numerous legacy algorithms/codes surveyed above which are not directly applicable otherwise. Furthermore, the rich scales encompassed in the CHSD system naturally call for different meshes and even different time step-size for the computation of different dynamics. For instance, it is advantageous to employ adaptive mesh refinement for the computation of Cahn-Hilliard equation so as to resolve the diffuse interface of small width, especially when low order finite element is used. In contrast, the computation of fluid equation can be done on fixed coarser grids as groundwater flow is typically slow. A decoupled scheme is much easier to implement these ideas compared to coupled ones. Finally, the scheme needs to be stable for long time simulations which are typically the objective in the context of groundwater study.    

We note that a fully decoupled numerical schemes can be constructed easily, for instance, by treating the coupling terms in the equations and in the interface boundary conditions explicitly. However, such a method is not known to have the highly desirable unconditional stability for solving the CHSD system. The design of an unconditionally stable, decoupled numerical scheme requires delicate consideration and application of the classical operator-splitting/fractional-step  methodology \cite{Strang1968, Yanenko1971, Marchuk1974, Chorin1967, Temam1968a, Temam1969, Temam1969b}. The work of Temam \cite{Temam1968a, Temam1969, Temam1969b} is particularly relevant where the unconditional stability of the fractional-step schemes applied to the Navier-Stokes system was first discussed. See also \cite{KiMo1985, vanKan1986, Shen1996, GuSh2003} for 
other work related to the Chorin-Temam fractional-step method for solving fluid equations.

Another phase field model for two-phase flow in karstic geometry was proposed in \cite{CSW2014}. In their model, the Cahn-Hilliard-Navier-Stokes equation with moving contact line type boundary conditions (generalized Navier slip boundary condition for velocity and dynamic boundary condition for order parameter) is adopted for two-phase flow in conduit. In porous medium, however, a two-phase Darcy's law is utilized. The authors propose a Robin-Robin domain decomposition method to solve the coupled system. However, the stability of the proposed scheme was not discussed.
An energy stable but fully coupled scheme for the Cahn-Hilliard-Stokes-Darcy system can be found in \cite{HSW2014}.

Finally, we comment on the stability of our proposed numerical scheme. The explicit treatment of the velocity in Cahn-Hilliard equation is analyzed in \cite{KSW2008} for the Cahn-Hilliard-Navier-Stokes system. It is shown that the scheme is conditionally stable with a mild CFL condition. Extrapolation in time of the interface boundary conditions have also been used in the computation of the non-stationary Stokes-Darcy system where long-time stability and error estimates  were established under a time step-size constraint dependent on the problem parameters, see for instance \cite{MuZh2010, CGSW2013, CGSW2015}. Both decoupled schemes that we propose here are unconditionally energy stable.  Our numerical experiments verify our theoretical results on the long-time stability of the novel schemes.

The paper is structured in the following way. We introduce the function spaces and the concept of weak formulation in section 2.
Two novel decoupled  numerical schemes for solving the CHSD system are proposed and analyzed in section 3. 
%The decoupling in the second scheme is realized through explicit discretization of the velocity in the Cahn-Hilliard equation (cf. \eqref{FSweak1}) and extrapolation in time of the interface boundary conditions (cf. \eqref{FSweak3} and \eqref{FSweak4}). It follows that one needs to solve a Cahn-Hilliard type equation in the whole domain, a Darcy equation in porous medium, and a Stokes equation in conduit in a separate and sequential fashion. The procedure can be generalized to design higher order schemes as well. The chemical potential equation is discretized according to a convex-splitting of the energy functional. 
%Two numerical experiments, boundary driven and buoyancy driven flows,  are performed in section 4 to illustrate the effectiveness of our scheme. Both numerical simulations are of physical interest for transport processes of two-phase flow in karst geometry. 
In section 4, we first verify numerically that our schemes are first-order accurate in time and long-time stable. Then we present two numerical experiments, boundary driven and buoyancy driven flows,   to illustrate the effectiveness of our schemes. Both numerical simulations are of physical interest for transport processes of two-phase flow in karst geometry.

%%%%%%%%%%%%%%%%%%%%%%%%%%%%%%%%%%%%%%%%%%%%%%%%%%%%%%%%%%%%%%%%%%%%%%%%%

\section{The Weak formulation }\label{WM}

For our CHSD problem with domain decomposition, we introduce the following spaces
\bea
 \BH({\rm div}; \Omega_j) &:=&\{\mathbf{w}\in \mathbf{L}^2(\Omega_j)~|~\nabla \cdot \mathbf{w}\in \mathbf{L}^2(\Omega_j)\}, \quad \quad j\in \{c,m\},\non\\
\mathbf{H}_{c,0}&:=&\{\mathbf{w}\in \BH^1(\Omega_c)~|~\mathbf{w}=\mathbf{0}\text{ on
}\Gamma_{c}\},
 \non\\
\mathbf{H}_{c,\text{div}}&:=&\{
\mathbf{w}\in\mathbf{H}_{c,0}~|~\nabla \cdot\mathbf{w}=0\}, \nonumber
%\label{Fnota}
 \\
\mathbf{H}_{m,0}&:=&\{\mathbf{w}\in \BH({\rm div}; \Omega_m)~|~\mathbf{w}\cdot \mathbf{n}_m=0 \ \text{on}\ \Gamma_{m}\}, \non\\
\mathbf{H}_{m,\mathrm{div}}&:=&\{\mathbf{w}\in\mathbf{H}_{m,0}~|~\nabla \cdot\mathbf{w}=0\},\non\\
 X_m&:=& H^1(\Omega_m) \cap L^2_0(\Omega_m).\non
 \eea
 Here $L^2_0(\Omega_m)$ is a subspace of $L^2$ whose elements are of mean zero.
We denote $(\cdot, \cdot)_c$, $(\cdot, \cdot)_m$ the inner products on the spaces $L^2(\Omega_c)$, $L^2(\Omega_m)$, respectively (also for the corresponding vector spaces). The inner product on $L^2(\Omega)$ is simply denoted by $(\cdot, \cdot)$. Then it is clear that
$$
(u,v)=(u_m,v_m)_m+(u_c,v_c)_c, \quad \|u\|_{L^2(\Omega)}^2=\|u_m\|_{L^2(\Omega_m)}^2+\|u_c\|_{L^2(\Omega_c)}^2,
$$
where $u_m:=u|_{\Omega_m}$ and $u_c:=u|_{\Omega_c}$. We will suppress the dependence on the domain in the $L^2$ norm if there is no ambiguity. We also denote $H'$ the dual space of $H$ with the duality induced by the $L^2$ inner product.

Below we give the definition of the weak formulation of the CHSD system in 2D. The weak formulation in 3D can be defined similarly with slight changes in time integrability of the functions.
\begin{definition}\label{defweak} 
Suppose that $d=2$ and $T>0$ is arbitrary. 
We consider the initial data $\vp_{0}\in H^1(\Omega), \ub_c(0) \in \mathbf{H}_{c,div}, \ub_m(0) \in \mathbf{H}_{m,div}$. The functions $(\ub_c, P_c, \mathbf{u}_m, P_m, \vp, \mu)$ with the following properties
\bea
&  \ub_c\in  L^\infty(0, T; \mathbf{L^2}(\Omega_c)) \cap L^2(0, T; \mathbf{H}_{c,0}),  \frac{\partial \ub_c}{\partial t} \in L^{\frac{4}{3}}(0,T; (\mathbf{H}_{c,0})^\prime),  \\
& \ub_m\in L^\infty(0, T; \mathbf{L^2}(\Omega_m)) \cap L^2(0, T; \mathbf{H}_{m,0}),\frac{\partial \ub_m}{\partial t} \in L^{\frac{4}{3}}(0,T; (\mathbf{H}_{m,0})^\prime), \label{regpm}\\
& P_c \in L^\frac43(0,T; L^2(\Omega_c)), \quad P_m \in L^\frac43(0,T; X_m), \\
& \vp\in L^\infty(0,T; H^1(\Omega))\cap L^2(0,T; H^3(\Omega)), \vp_t \in  L^2(0;T; (H^1(\Omega))'),\\
& \mu\in L^2(0, T; H^1(\Omega)),
\eea
is called a finite energy weak solution of the CHSD system \eqref{HSCH-NSCH1}--\eqref{IBCi7}, if the following conditions are satisfied:

(1) For any $\mathbf{v}_c\in  \mathbf{H}_{c,0}$ and $q_c\in  L^2(\Omega_c)$,
 \begin{eqnarray}
 &&\langle \partial_t \ub_c,\mathbf{v}_c \rangle_c + 2(\nu(\vp_c)\mathbb{D}(\ub_c),\mathbb{D}(\mathbf{v}_c))_c - (P_c, \nabla \cdot \mathbf{v}_c)_c
 \non\\
&&+\sum_{i=1}^{d-1}\alpha_{BJSJ}\int_{\Gamma_{cm}}\frac{\nu(\vp_m)}{\sqrt{{\rm trace} (\Pi)}}(\ub_c\cdot\btau_i)(\mathbf{v}_c\cdot\btau_i)dS +  \int_{\Gamma_{cm}} P_m (\mathbf{v}_c\cdot \mathbf{n}_{cm}) dS\non\\
&&  + (\nabla \cdot \ub_c, q_c)_c +(\vp_c \nabla \mu(\varphi_c),  \mathbf{v}_c )_c =0.
 \label{weak1}
\end{eqnarray}

(2) For any $\mathbf{v}_m\in  \mathbf{H}_{m,0}$ and $q_m\in  H^1(\Omega_m)$,
\begin{eqnarray}
&&\frac{\rho_0}{\chi} \langle \partial_t \ub_m,\mathbf{v}_m \rangle_m+(\nu(\varphi_m)\Pi^{-1}\ub_m,  \mathbf{v}_m)_m + (\nabla P_m,  \mathbf{v}_m)_m - ( \ub_m, \nabla q_m)_m\non \\
&& +(\vp_m \nabla \mu(\varphi_m),  \mathbf{v}_m )_m -\int_{\Gamma_{cm}} \ub_c \cdot \mathbf{n}_{cm} q_m\, ds=0.
\label{weak2}
\end{eqnarray}

(3) For any $v, \phi\in  H^1(\Omega)$,
\begin{eqnarray}
&&\langle \partial_t\varphi,v)+({\rm M}(\vp) \nabla \mu(\vp),\nabla v)-(\ub \varphi ,\nabla v)=0, \label{weak3}\\
&&\gamma \left[\frac{1}{\epsilon}(f(\varphi),v)+\epsilon(\nabla\varphi,\nabla v) \right]-(\mu(\varphi),\phi)=0. \label{weak4}
\end{eqnarray}

(3)  $\vp|_{t=0}=\vp_{0}(x), \ub_c|_{t=0}=\ub_c(0), \ub_m|_{t=0}=\ub_m(0).$

(4) The finite energy solution satisfies the energy inequality
\be
\mathcal{E}(t)  +\int_s^t\mathcal{D}(\tau) d\tau \leq \mathcal{E}(s), \label{Energyinq}
\ee
for all $t\in [s,T)$ and almost all $s\in [0,T)$ (including $s=0$), where the total energy $\mathcal{E}$ is given by \eqref{totenergy}.

\end{definition}
We note that the Darcy pressure $P_m$ and the Stokes pressure $P_c$ are uniquely determined only up to a common constant in the CHSD system \eqref{HSCH-NSCH1}--\eqref{IBCi7}. In the Definition \ref{defweak}, we require $P_m \in X_m$ so that it is of mean zero and uniquely determined. Then the Stokes pressure is uniquely determined in view of the interface boundary condition \eqref{IBCi6}. Hence in the weak formulation we only impose $P_c \in L^2(\Omega_c)$.
We refer to \cite{HWW2014} for the study of the  existence  of such a weak solution for a similar problem.

%%%%%%%%%%%%%%%%%%%%%%%%%%%%%%%%%%%%%%%%%%%%%%%%%%%%%%%%%%%%%%%
% sect. 3
\section{The numerical schemes}

Let $\tau >0$ be a time step size and set $t^k=k\tau$ for $0\leq k \leq K=[T/\tau]$.
Let $\mathcal{T}^h_c$ ($\mathcal{T}^h_m$) be a quasi-uniform triangulation of the domain $\Omega_c$ ($\Omega_m$ resp.) of mesh size $h$. In addition, we assume that the triangulations $\mathcal{T}^h_c$ and $\mathcal{T}^h_m$ coincide on the interface $\Gamma_{cm}$ in the sense that triangles in $\Omega_m$ and $\Omega_c$ share the same edges along $\Gamma_{cm}$. Then $\mathcal{T}^h:= \mathcal{T}^h_c \cup \mathcal{T}^h_m$ forms a triangulation of the domain $\Omega$.   Let  $Y_h$ denote a finite element approximation of  $H^1(\Omega)$  based on the triangulation $\mathcal{T}_h$. Typical examples of $Y_h$ include
\begin{align*}
Y_h=\{v_h \in C(\bar{\Omega})\big| v_h |_K \in P_r(K), \forall K \in \mathcal{T}_h\},
\end{align*}
where $P_r(K)$ is the space of polynomials of degree less than or equal to $r$ on the triangle $K$. 
Denote by $\mathbf{X}_c^h$ the finite element approximation of $\mathbf{H}_{c,0}$, and  by $M_c^h$ the finite element approximation of $L^2(\Omega_c)$. Note that we did not impose the condition of mean zero on the space $M_c^h$. This is consistent with the Definition \ref{defweak}.
We assume that $\mathbf{X}_c^h$ and $M_c^h$ are stable approximation spaces for Stokes velocity and pressure in the sense that
\begin{align}\label{inf-supS}
\sup_{\mathbf{v}_h \in \mathbf{X}_c^h} \frac{(\nabla \cdot \mathbf{v}_h,  q_h)_c}{||\mathbf{v}_h||_{H^1}} \geq c||q_h||_{L^2}, \quad \forall q_h \in M_c^h.
\end{align} 
The validity of such an inf-sup condition for some standard finite element spaces can be found in \cite{LSY2002}. The classical P2-P0,   Taylor-Hood finite element spaces and the Mini finite element spaces    are commonly adopted in practice for $\mathbf{X}_c^h$ and $M_c^h$, cf. \cite{LSY2002, GiRa1986}. Similarly, one can define the finite element spaces $\mathbf{X}_m^h$ (finite element subspace of $\mathbf{H}_{m,0}$) and $M_m^h$ (finite element subsapce of $X_m$) for the Darcy velocity and pressure. We also assume 
$\mathbf{X}_m^h$ and $M_m^h$ are stable satisfying
\begin{align}\label{inf-supD}
\sup_{\mathbf{v}_h \in \mathbf{X}_m^h} \frac{( \mathbf{v}_h, \nabla q_h)_m}{||\mathbf{v}_h||_{L^2}} \geq c||q_h||_{L^2}, \quad \forall q_h \in M_m^h.
\end{align}
We remark that the Taylor-Hood finite element spaces satisfy the above condition.
\begin{remark}
The inf-sup condition \eqref{inf-supD} for Darcy equation is not the standard one. We remark that it holds for some common finite element spaces. For example, if $\nabla(M_m^h)\subset\mathbf{V}_m^h$, then we can can take $\mathbf{v}_h=\nabla q_h$. Therefore, 
$$
   \frac{( \mathbf{v}_h, \nabla q_h)_m}{||\mathbf{v}_h||_{L^2}}=\|\nabla q_h\|_{L^2}\ge c\|q_h\|_{L^2},
$$
here we have used Poincar\'{e} inequality in the last inequality since we require $q_h\in L^2_0(\Omega_m)$.  Another case is to use mixed finite element for the Stokes equation with continuous pressure approximation, such as Taylor-Hood element, and we can choose $ \mathbf{v}_h\cdot \mathbf{n}=0$ with $\mathbf{n}$ the unit outer normal of $\partial \Omega_m$, then $( \mathbf{v}_h, \nabla q_h)_m=-(\nabla \cdot  \mathbf{v}_h, q_h)_m$, and the 
inf-sup condition  \eqref{inf-supD} can be obtained by using the standard inf-sup condition for the Stokes equation.
\end{remark}

Before we describe our unconditionally stable and decoupled numerical schemes, we point out that a fully decoupled numerical scheme for solving the CHSD model can be constructed easily, for instance, by treating  the velocity in the Cahn-Hilliard equation \eqref{weak3} and the interface boundary conditions in Eqs. \eqref{weak1} and \eqref{weak2} explicitly. The explicit treatment of velocity in the phase field fluid models have been analyzed carefully, in \cite{KSW2008} for the case of Cahn-Hilliard-Navier-Stokes equations, and in \cite{HaWa2016}  for the case of Cahn-Hilliard-Darcy model. And decoupled schemes using extrapolation in time for interface boundary conditions have been proposed and analyzed for single phase Stokes-Darcy system, see \cite{CGSW2013, CGSW2015} and references therein. However, in the setting of CHSD model, it seems that this type of decoupling strategy does not lead to unconditional stability. It is our aim here to design unconditionally stable, energy stable in particular,  and decoupled numerical schemes for solving the CHSD model.

% 3.1
\subsection{An energy stable scheme (PD) decoupling the order parameter and the velocity}
Here we present an unconditionally stable numerical scheme that decouples the computation of the Cahn-Hilliard equation from that of the fluid equations (Stokes-Darcy system). We employ a fractional-step method for realizing the decoupling. An intermediate velocity driven solely by the capillary force is used in the time-discretization of the Cahn-Hilliard equation, see Eqs. \eqref{InterM1} and \eqref{InterM2} below. Hence, upon substitution, the velocity equations are completely decoupled from the equations for the order parameter. In the context of phase field models, this idea of fractional step method is first applied in solving the Cahn-Hilliard-Navier-Stokes equations, cf. \cite{Minjeaud2013, ShYa2015}. We point out that the Stokes equations are still coupled with the Darcy equations in the scheme (PD).

We present the scheme (PD) for solving the CHSD model \eqref{HSCH-NSCH1}--\eqref{IBCi7} as follows: 

\noindent Step 1: Cahn-Hilliard equation: find $\varphi_h^{k+1} \in Y_h$ and $\mu_h^{k+1} \in Y_h$ such that for any $v_h, \phi_h \in  Y_h$,
\begin{eqnarray}
&&( \delta_t\varphi^{k+1}_h,v_h)+({\rm M}(\vp^k_h) \nabla \mu^{k+1}_h,\nabla v_h)-(\baru^{k+1}_h \varphi^k_h ,\nabla v_h)=0, \label{3Fweak3}\\
&&\gamma \left[\frac{1}{\epsilon}(f(\varphi^{k+1}_h, \varphi^k_h),\phi_h)+\epsilon(\nabla\varphi^{k+1}_h,\nabla \phi_h) \right]-(\mu^{k+1}_h,\phi_h)=0, \label{3Fweak4}
\end{eqnarray}
with $f(\varphi^{k+1}_h, \varphi^k_h)=(\varphi^{k+1}_h)^3-\varphi^k_h$, and $\delta_t$ denoting the backward difference quotient operator $\delta_t\varphi^{k+1}_h:= \frac{\varphi^{k+1}_h-\varphi^{k}_h}{\tau}$.
Here the intermediate velocity $\baru^{k+1}_h$ in Eq. \eqref{3Fweak3} is defined as
\begin{eqnarray}
\baru ^{k+1}=\left\{
\begin{aligned}
&\baru_{m,h}^{k+1}, \quad x \in \Omega_m, \\
&\baru_{c,h}^{k+1}, \quad x \in \Omega_c,
\end{aligned}
\right.
\end{eqnarray}
where $\baru_{m,h}^{k+1}$ and $\baru_{c,h}^{k+1}$ are defined through the following equations
\begin{align}
& \frac{\rho_0}{\chi}\frac{\baru_{m,h}^{k+1}-\ub_{m,h}^k}{\tau}+\varphi_{m,h}^{k}\nabla \mu_{m,h}^{k+1}=0, \label{InterM1} \\
& \rho_0 \frac{\baru_{c,h}^{k+1}-\ub_{c,h}^k}{\tau}+\varphi_{c,h}^{k}\nabla \mu_{c,h}^{k+1}=0. \label{InterM2}
\end{align}

\noindent Step 2: Stokes equation: find $\ub_{c, h}^{k+1} \in \mathbf{X}_{c}^h$ and $P_{c,h}^{k+1} \in M_c^h$ such that for any $\mathbf{v}_{c,h}\in  \mathbf{X}_{c}^h$ and $q_{c,h}\in  M_c^h$,
 \begin{eqnarray}
 &&\rho_0( \delta_t\ub_{c,h}^{k+1},\mathbf{v}_{c,h} )_c  +a_c(\ub_{c,h}^{k+1},\mathbf{v}_{c,h}\big)+ b_c(\mathbf{v}_{c,h}, P_{c,h}^{k+1})
 +  \int_{\Gamma_{cm}} P_{m,h}^{k+1} (\mathbf{v}_{c,h}\cdot \mathbf{n}_{cm}) dS\non\\
&& - b_c(\mathbf{u}^{k+1}_{c,h}, q_{c,h}) +(\vp_{c,h}^k \nabla \mu_{c,h}^{k+1},  \mathbf{v}_{c,h} )_c =0,
 \label{3Fweak1}
\end{eqnarray}
where
\begin{align}
& a_c(\ub_{c,h}^{k+1},\mathbf{v}_{c,h})=2(\nu(\vp_{c,h}^k)\mathbb{D}(\ub_{c,h}^{k+1}),\mathbb{D}(\mathbf{v}_{c,h}))_c \non \\
& +\sum_{i=1}^{d-1}\alpha_{BJSJ}\int_{\Gamma_{cm}}\frac{\nu(\vp_{c,h}^k)}{\sqrt{{\rm trace} (\Pi)}}(\ub_{c,h}^{k+1}\cdot\btau_i)(\mathbf{v}_{c,h}\cdot\btau_i)dS, \label{AC}\\
& b_c(\mathbf{v}_{c,h}, q_{c,h})=-(\nabla \cdot \mathbf{v}_{c,h}, q_{c,h})_c. \label{BC}
\end{align}

\noindent And Darcy equation: find $\ub_{m, h}^{k+1} \in \mathbf{X}_{m}^h$ and $P_{m,h}^{k+1} \in M_m^h$ such that for any $\mathbf{v}_{m,h}\in  \mathbf{X}_{m}^h$ and $q_{m,h}\in  M_m^h$,
\begin{eqnarray}
&&\frac{\rho_0}{\chi} (\delta_t\ub_{m,h}^{k+1},\mathbf{v}_{m,h} )_m +a_m\big(\mathbf{u}_{m,h}^{k+1},\mathbf{v}_{m,h}\big) 
+b_m(\mathbf{v}_{m,h}, P_{m,h}^{k+1})  +(\vp_{m,h}^k \nabla \mu_{m,h}^{k+1},  \mathbf{v}_{m,h} )_m \non \\
&&-\int_{\Gamma_{cm}} \ub_{c,h}^{k+1} \cdot \mathbf{n}_{cm} q_{m,h}\, ds- b_m(\ub_{m,h}^{k+1},  q_{m,h})=0,
\label{3Fweak2}
\end{eqnarray}
where
\begin{align}
& a_m(\mathbf{u}_{m,h}^{k+1},\mathbf{v}_{m,h})=\big(\nu(\varphi_{m,h}^{k})\Pi^{-1}\mathbf{u}_{m,h}^{k+1}, \mathbf{v}_{m,h}\big)_m, \label{AM}\\
& b_m(\mathbf{v}_{m,h}, q_{m,h})=( \mathbf{v}_{m,h}, \nabla q_{m,h})_m. \label{BM}
\end{align}

The decoupling of the Cahn-Hilliard equation and Stokes-Darcy system is realized through 
a fractional step method. For instance, Eqs. \eqref{InterM1} and \eqref{3Fweak2} amount to solving the Darcy system \eqref{HSCH-NSCH4}-\eqref{HSCH-NSCH5} via the following temporal splitting algorithm, suppressing the spatial discretization,
\begin{align}
&\frac{\rho_0}{\chi}\frac{\baru_{m}^{k+1}-\ub_{m}^k}{\tau}+\varphi_{m}^{k}\nabla \mu_{m}^{k+1}=0, \label{InterS1} \\
&\frac{\rho_0}{\chi}\frac{\ub_{m}^{k+1}-\baru_{m}^{k+1}}{\tau} + \nu(\varphi_{m}^{k}){\Pi}^{-1}\mathbf{u}_{m}^{k+1}
+\nabla P_{m}^{k+1}=0, \label{InterS2} \\
& \nabla \cdot \ub_{m}^{k+1}=0.  \label{InterS3}
\end{align}
 It is clear that the scheme is consistent as $\baru_{m}^{k+1}$ is a first order in-time approximation of $\ub_{m}^k$. Furthermore, the intermediate velocity never appears in the practical computation as one can substitute the definition of $\baru_{m}^{k+1}$ back into the Cahn-Hilliard equation and solve for $\mathbf{u}_{m}^{k+1}$ via Eq. \eqref{3Fweak2}.

To state the energy stability of the scheme (PD), we define a discrete free energy functional
\begin{align*}
E(\varphi_h^k)=\gamma \int_\Omega \left(\frac{1}{\epsilon} F(\varphi_h^k)+ \frac{\epsilon}{2}|\nabla \varphi_h^k|^2 \right)dx,
\end{align*}
 and also a discrete total energy functional
 \begin{align}\label{DisEner}
 \mathcal{E}^{k}= \int_{\Omega_c}\frac{\rho_0}{2}|\ub_{c,h}^{k}|^2 dx+ \int_{\Omega_m}\frac{\rho_0}{2\chi}|\ub_{m,h}^k|^2 dx+ \gamma \int_\Omega \left(\frac{1}{\epsilon} F(\varphi_h^k)+ \frac{\epsilon}{2}|\nabla \varphi_h^k|^2 \right)dx.
 \end{align}
One can show that the scheme \eqref{3Fweak3}--\eqref{3Fweak2} is unconditionally uniquely solvable and energy-stable, in the sense of the following theorem.
%%%%% Theorem 1
\begin{theorem}\label{PDsta}
The scheme (PD) (Eqs. \eqref{3Fweak3}--\eqref{3Fweak2}) is unconditionally uniquely solvable and mass conservative at each time step. Moreover, 
the scheme (PD)  satisfies a modified energy law
%\begin{align}
%&\mathcal{E}^{k+1}-\mathcal{E}^k + dt ||\sqrt{M}\nabla \mu_h^{k+1}||_{L^2}^2+ 2 dt ||\sqrt{\nu}\mathbb{D}(\ub_{c,h}^{k+1})||_{L^2}^2
%+ dt \sum_{i=1}^{d-1}\frac{\alpha_{BJSJ}}{\sqrt{{\rm trace} (\Pi)}}\int_{\Gamma_{cm}}\nu(\ub_{c,h}^{k+1}\cdot\btau_i)^2 dS \non \\
%&+ dt ||\sqrt{\nu/\Pi}\ub_{m,h}^{k+1}||_{L^2}^2 \leq -\frac{\gamma \epsilon}{2}||\nabla(\ar1-\var0)||_{L^2}^2 %-\frac{\rho_0}{2}||\ub_{c,h}^{k+1}-\ub_{c,h}^k||_{L^2}^2-\frac{\rho_0}{2\chi}||\ub_{m,h}^{k+1}-\ub_{m,h}^k||_{L^2}^2, \label{S2stab}
%\end{align}
\begin{align}
&\mathcal{E}^{k+1}-\mathcal{E}^k + \tau  a_c(\ub_{c,h}^{k+1},\ub_{c,h}^{k+1})+ \tau ||\sqrt{\nu/\Pi}\ub_{m,h}^{k+1}||_{L^2}^2  \non \\
&\leq -\frac{\gamma \epsilon}{2}||\nabla(\ar1-\var0)||_{L^2}^2 -\frac{\rho_0}{4}||\ub_{c,h}^{k+1}-\ub_{c,h}^k||_{L^2}^2-\frac{\rho_0}{ 4\chi}||\ub_{m,h}^{k+1}-\ub_{m,h}^k||_{L^2}^2, \label{PDEner}
\end{align}
 Thus it is unconditionally energy-stable.
\end{theorem}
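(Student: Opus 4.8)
\emph{Overall strategy and the easy parts.} The scheme splits into Step~1 (Cahn--Hilliard, once the intermediate--velocity relations \eqref{InterM1}--\eqref{InterM2} are substituted into \eqref{3Fweak3}) and Step~2 (the coupled Stokes--Darcy solve with $\varphi_h^{k+1},\mu_h^{k+1}$ as data), so I would treat mass conservation, unique solvability and the energy law in turn, the last being the heart of the matter. Taking $v_h\equiv 1$ in \eqref{3Fweak3} gives $(\varphi_h^{k+1},1)=(\varphi_h^k,1)$, i.e.\ discrete mass conservation. For Step~1, substituting \eqref{InterM1}--\eqref{InterM2} produces a closed $(\varphi_h^{k+1},\mu_h^{k+1})$-system whose mobility coefficient is $\mathrm{M}(\varphi_h^k)$ augmented by the strictly positive term $\tfrac{\tau}{\rho_0}(\varphi_{c,h}^k)^2$ on $\Omega_c$ (resp.\ $\tfrac{\tau\chi}{\rho_0}(\varphi_{m,h}^k)^2$ on $\Omega_m$); introducing the associated discrete weighted inverse Neumann Laplacian, this system is the Euler--Lagrange equation of a functional on $\{\varphi\in Y_h:\int_\Omega\varphi=\int_\Omega\varphi_h^k\}$ that is strictly convex and coercive, with convexity coming from the implicitly treated cubic (so $\tfrac14 s^4$ is convex), the Dirichlet term $\tfrac{\gamma\epsilon}{2}\|\nabla\varphi\|_{L^2}^2$, and the positive weighted $H^{-1}$ quadratic form, hence there is a unique solution. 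Alternatively, uniqueness follows directly: subtracting two solutions, testing the difference of \eqref{3Fweak3} with $\tau\,\delta\mu_h$ and of \eqref{3Fweak4} with $\delta\varphi_h$ and adding makes the mixed $\mu$--$\varphi$ terms cancel, and monotonicity of $s\mapsto s^3$, the Dirichlet term and mass conservation then force $\delta\varphi_h=\delta\mu_h=0$. Given $(\varphi_h^{k+1},\mu_h^{k+1})$, Step~2 is a square linear system; testing the homogeneous version with $(\ub_{c,h}^{k+1},P_{c,h}^{k+1})$ and $(\ub_{m,h}^{k+1},P_{m,h}^{k+1})$ and adding, the $b_c,b_m$ pairs and the two $\Gamma_{cm}$-coupling integrals cancel, leaving $\tfrac{\rho_0}{\tau}\|\ub_{c,h}^{k+1}\|_{L^2}^2+a_c(\ub_{c,h}^{k+1},\ub_{c,h}^{k+1})+\tfrac{\rho_0}{\chi\tau}\|\ub_{m,h}^{k+1}\|_{L^2}^2+a_m(\ub_{m,h}^{k+1},\ub_{m,h}^{k+1})=0$; each term is nonnegative, so $\ub_{c,h}^{k+1}=\ub_{m,h}^{k+1}=0$, and then the inf-sup conditions \eqref{inf-supD}, \eqref{inf-supS} give $P_{m,h}^{k+1}=0$, $P_{c,h}^{k+1}=0$.

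\emph{Energy estimate, Step~1 contribution.} Test \eqref{3Fweak3} with $v_h=\tau\mu_h^{k+1}$ and \eqref{3Fweak4} with $\phi_h=\varphi_h^{k+1}-\varphi_h^k$ and add; the $(\mu_h^{k+1},\varphi_h^{k+1}-\varphi_h^k)$ terms cancel, and using the convex-splitting inequality $\tfrac\gamma\epsilon\big(f(\varphi_h^{k+1},\varphi_h^k),\varphi_h^{k+1}-\varphi_h^k\big)\ge \tfrac\gamma\epsilon\int_\Omega\big(F(\varphi_h^{k+1})-F(\varphi_h^k)\big)\,dx$ (convexity of $\tfrac14 s^4$ and concavity of $-\tfrac12 s^2$) together with $2a(a-b)=a^2-b^2+(a-b)^2$ for the gradient term yields
\[
E(\varphi_h^{k+1})-E(\varphi_h^k)+\tfrac{\gamma\epsilon}{2}\|\nabla(\varphi_h^{k+1}-\varphi_h^k)\|_{L^2}^2+\tau\|\sqrt{\mathrm{M}(\varphi_h^k)}\,\nabla\mu_h^{k+1}\|_{L^2}^2\le \tau\big(\baru_h^{k+1}\varphi_h^k,\nabla\mu_h^{k+1}\big).
\]
I then use \eqref{InterM1}--\eqref{InterM2} to replace $\varphi_{c,h}^k\nabla\mu_{c,h}^{k+1}=-\tfrac{\rho_0}{\tau}(\baru_{c,h}^{k+1}-\ub_{c,h}^k)$ and $\varphi_{m,h}^k\nabla\mu_{m,h}^{k+1}=-\tfrac{\rho_0}{\chi\tau}(\baru_{m,h}^{k+1}-\ub_{m,h}^k)$ on the right-hand side, obtaining an inequality expressed purely in $E$, the intermediate velocities $\baru_h^{k+1}$ and $\ub^k$ after one application of $2a(a-b)=a^2-b^2+(a-b)^2$.

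\emph{Energy estimate, Step~2 contribution and assembly.} Test \eqref{3Fweak1} with $(\mathbf{v}_{c,h},q_{c,h})=\tau(\ub_{c,h}^{k+1},P_{c,h}^{k+1})$ and \eqref{3Fweak2} with $(\mathbf{v}_{m,h},q_{m,h})=\tau(\ub_{m,h}^{k+1},P_{m,h}^{k+1})$ and add: the pressure pairs cancel and the $\Gamma_{cm}$-integrals $\pm\tau\int_{\Gamma_{cm}}P_{m,h}^{k+1}(\ub_{c,h}^{k+1}\cdot\mathbf{n}_{cm})\,dS$ cancel, and after polarization one obtains an identity relating $\tfrac{\rho_0}{2}\|\ub_{c,h}^{k+1}\|_{L^2}^2+\tfrac{\rho_0}{2\chi}\|\ub_{m,h}^{k+1}\|_{L^2}^2$, the dissipation $\tau a_c+\tau a_m$, and the capillary terms $\tau(\varphi_{c,h}^k\nabla\mu_{c,h}^{k+1},\ub_{c,h}^{k+1})_c$, $\tau(\varphi_{m,h}^k\nabla\mu_{m,h}^{k+1},\ub_{m,h}^{k+1})_m$, which I again rewrite via \eqref{InterM1}--\eqref{InterM2}. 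Adding this identity to the Step~1 inequality, all $\baru_h^{k+1}$-quantities recombine through polarization (notably $\tfrac12|\bar b-a|^2+\tfrac12|\bar b-b|^2=|\bar b|^2-(\bar b,a)-(\bar b,b)+\tfrac12|a|^2+\tfrac12|b|^2$) into $\mathcal{E}^{k+1}-\mathcal{E}^k$ plus the four nonnegative squares $\tfrac{\rho_0}{2}\|\baru_{c,h}^{k+1}-\ub_{c,h}^k\|_{L^2}^2$, $\tfrac{\rho_0}{2}\|\baru_{c,h}^{k+1}-\ub_{c,h}^{k+1}\|_{L^2}^2$ and their $\Omega_m$ analogues (with $\tfrac{\rho_0}{2\chi}$), plus $\tau a_c+\tau a_m+\tau\|\sqrt{\mathrm{M}(\varphi_h^k)}\,\nabla\mu_h^{k+1}\|_{L^2}^2+\tfrac{\gamma\epsilon}{2}\|\nabla(\varphi_h^{k+1}-\varphi_h^k)\|_{L^2}^2$, the whole sum being $\le 0$. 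Dropping the nonnegative $\tau\|\sqrt{\mathrm{M}}\nabla\mu_h^{k+1}\|_{L^2}^2$ and using $\|\baru_{c,h}^{k+1}-\ub_{c,h}^k\|_{L^2}^2+\|\baru_{c,h}^{k+1}-\ub_{c,h}^{k+1}\|_{L^2}^2\ge\tfrac12\|\ub_{c,h}^{k+1}-\ub_{c,h}^k\|_{L^2}^2$ (and likewise on $\Omega_m$) to convert the four squares into $\tfrac{\rho_0}{4}\|\ub_{c,h}^{k+1}-\ub_{c,h}^k\|_{L^2}^2+\tfrac{\rho_0}{4\chi}\|\ub_{m,h}^{k+1}-\ub_{m,h}^k\|_{L^2}^2$, together with $a_m(\ub_{m,h}^{k+1},\ub_{m,h}^{k+1})=\|\sqrt{\nu/\Pi}\,\ub_{m,h}^{k+1}\|_{L^2}^2$, yields precisely \eqref{PDEner}; since its right side is $\le 0$ and $a_c,a_m\ge 0$, one gets $\mathcal{E}^{k+1}\le\mathcal{E}^k$ for all $k$, i.e.\ unconditional energy stability.

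\emph{Main difficulty.} There is no deep analytical obstacle; the delicate part is the bookkeeping of the assembly, since the intermediate velocity $\baru_h^{k+1}$ enters both the Cahn--Hilliard test (through $\nabla\mu_h^{k+1}$) and the Stokes--Darcy test, so the defining relations \eqref{InterM1}--\eqref{InterM2} must be invoked twice and several polarization identities combined so that every cross term is absorbed into a manifestly nonnegative square. The one structural fact one must not overlook is that, upon testing Step~2 with the velocity and the pressure themselves, the pressure terms and the $\Gamma_{cm}$-interface terms cancel --- this is exactly why the fractional-step decoupling inherits the dissipative energy law.
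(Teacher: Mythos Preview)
Your proof is correct and follows essentially the same approach as the paper: the same test-function choices for the Cahn--Hilliard and Stokes--Darcy parts, the same convex-splitting inequality \eqref{NonIne}, the same cancellation of the $b_c,b_m$ and $\Gamma_{cm}$-terms, and the same elementary inequality $2(a^2+b^2)\ge(a+b)^2$ to convert the four intermediate-velocity squares into $\tfrac{\rho_0}{4}\|\ub_{c,h}^{k+1}-\ub_{c,h}^k\|_{L^2}^2+\tfrac{\rho_0}{4\chi}\|\ub_{m,h}^{k+1}-\ub_{m,h}^k\|_{L^2}^2$. The one cosmetic difference is in the Stokes-pressure uniqueness: you appeal directly to \eqref{inf-supS} on the non--mean-zero space $M_c^h$, whereas the paper decomposes $E_c^p$ into its mean $C$ plus a mean-zero part, uses the classical Stokes inf-sup (with $\mathbf{v}_{c,h}=0$ on $\Gamma_{cm}$) to conclude $E_c^p=C$, and then reads off $C=0$ from the remaining boundary integral.
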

\begin{proof}
Note that upon substitution of the intermediate velocities \eqref{InterM1} and \eqref{InterM2} into the Cahn-Hilliard equation \eqref{3Fweak3}, the nonlinear Cahn-Hilliard equations \eqref{3Fweak3}-\eqref{3Fweak4} are completely decoupled from the linear Stokes-Darcy equations   
\eqref{3Fweak1}-\eqref{3Fweak2}. Given $\varphi^k_h, \mathbf{u}^k_h$, Eqs. \eqref{3Fweak3}-\eqref{3Fweak4} can be viewed as a first-order convex-splitting discretization of the Cahn-Hilliard equation with known source terms. Thus the unique solvability of the Cahn-Hilliard part can be established by following a gradient flow  argument, cf. \cite{KSW2008, Wise2010, Shen2012}.  See also \cite{HaWa2015, HWW2014} for an alternative proof exploiting the property of  monotonicity in convex-splitting schemes. Once $\mu_h^{k+1}$ is known, Eqs. \eqref{3Fweak1}-\eqref{3Fweak2} become linear equations for velocity and pressure. 
Its unique solvability can be established the same way as in the classical mixed formulation for Stokes equation, cf. \cite{GiRa1986} using the inf-sup conditions. For completeness, we give the details of an alternative argument that does not rely on the inf-sup condition for the coupled Stokes-Darcy system explicitly here. As Eqs. \eqref{3Fweak1}-\eqref{3Fweak2} define a finite linear system for $\mathbf{u}^{k+1}_{c,h}, P_{c,h}^{k+1}, \mathbf{u}^{k+1}_{m,h}, P_{m,h}^{k+1}$, we only need to show that solutions are unique. \footnote{Invertibility is equivalent to a trivial kernel for a square matrix.} Suppose there are two solutions, and define their differences by $\E_c^{u}, E_c^p, \E_m^u, E_m^p$ respectively. Then the differences satsify $\forall \mathbf{v}_{c,h}\in  \mathbf{X}_{c}^h, q_{c,h}\in  M_c^h$ and $\mathbf{v}_{m,h}\in  \mathbf{X}_{m}^h, q_{m,h}\in  M_m^h$,
\begin{align}
 &\frac{\rho_0}{\tau}( \E_c^u,\mathbf{v}_{c,h} )_c  +a_c(\E_c^u,\mathbf{v}_{c,h}) +b_c(\mathbf{v}_{c,h}, E_c^p)  -b_c(  \E_c^u, q_{c,h})
 \non\\
& +  \int_{\Gamma_{cm}} E_m^p (\mathbf{v}_{c,h}\cdot \mathbf{n}_{cm}) dS =0,  
 \label{PFweak1} \\
 &\frac{\rho_0}{\chi \tau}(\E_m^u,\mathbf{v}_{m,h} )_m +a_m (\E_m^u, \mathbf{v}_{m,h})
+b_m(\mathbf{v}_{m,h}, E_m^p ) - b_m(\E_m^u, q_{m,h}) \non \\
&-\int_{\Gamma_{cm}} \E_c^u \cdot \mathbf{n}_{cm} q_{m,h}\, ds=0.
\label{PFweak2}
\end{align}
Taking $\mathbf{v}_{c,h}=\E_c^u, q_{c,h}=E_c^p$ in Eq. \eqref{PFweak1} and $\mathbf{v}_{m,h}=\E_m^u, q_{m,h}=E_m^p$ in Eq. \eqref{PFweak2}, and adding the equations together, one obtains
\begin{align*}
&\frac{\rho_0}{\tau}( \E_c^u,\E_c^u )_c  +a_c(\E_c^u,\E_c^u)+\frac{\rho_0}{\chi \tau}(\E_m^u,\E_m^u )_m +a_m (\E_m^u, \E_m^u)=0.
\end{align*}
Hence $\E_c^u=\E_m^u=0$ and Eqs. \eqref{PFweak1}, \eqref{PFweak2} reduce to 
\begin{align}
& b_c(\mathbf{v}_{c,h}, E_c^p)+ \int_{\Gamma_{cm}} E_m^p (\mathbf{v}_{c,h}\cdot \mathbf{n}_{cm}) dS =0, \label{Rbc} \\
&b_m(\mathbf{v}_{m,h}, E_m^p )=0.
\end{align}
It then follows from the inf-sup conditions \eqref{inf-supD} that $E_m^p=0$. Eq. \eqref{Rbc}  can be written as 
\begin{align}\label{RRbc}
b_c(\mathbf{v}_{c,h}, E_c^p-C)+ C\int_{\Gamma_{cm}} \mathbf{v}_{c,h} \cdot \mathbf{n}_{cm} dS=0,
\end{align}
with the constant $C=\frac{1}{|\Omega_c|} \int_{\Omega_c} E_c^p dx.$ Now if $\mathbf{v}_{c,h}=0$ on $\Gamma_{cm}$, then the classical inf-sup condition for Stokes equations (satisfied, for instance, by Taylor-Hood finite element spaces) implies that $E_c^p=C$. Combining this with Eq. \eqref{RRbc} further yields $E_c^p=C=0$.
Therefore the two solutions must be the same. Thus the scheme (PD) is unconditionally uniquely solvable at each time step.

Now we show that the modified energy law \eqref{PDEner} holds.
Owing to the convexity, one can establish the elementary inequality
\begin{align}\label{NonIne}
F(\varphi^{k+1}_h)- F(\varphi^{k}_h)\leq  f(\varphi^{k+1}_h, \varphi^k_h)(\varphi^{k+1}_h-\varphi^k_h),
\end{align}
where one may recall $F(\varphi)=\frac{1}{4}(\varphi^2-1)^2$ and $f(\phi, \varphi)=\phi^3-\varphi$.
Taking the test function $v_h=\tau \mu_h^{k+1}$ in Eq. \eqref{3Fweak3} and $\phi_h=\varphi^{k+1}_h-\varphi^{k}_h$ in Eq. \eqref{3Fweak4}, and adding the results together, one obtains by virtue of the inequality \eqref{NonIne}
\begin{align}\label{CH-E1}
E(\varphi_h^{k+1})-E(\var0)+\tau ||\sqrt{M}\nabla \mu_h^{k+1}||_{L^2}^2 - \tau(\baru^{k+1}_h \varphi^k_h ,\nabla \mu_h^{k+1}) \leq -\frac{\gamma \epsilon}{2}||\nabla(\ar1-\var0)||_{L^2}^2.
\end{align}

Next, it follows from Eq. \eqref{InterM1} and Eq. \eqref{InterM2} that
\begin{align}\label{InterMIne}
&\frac{\rho_0}{2\chi}\large\{||\baru_{m,h}^{k+1}||^2_{L^2}-||\ub_{m,h}^k||^2_{L^2}+||\baru_{m,h}^{k+1}-\ub_{m,h}^k||^2_{L^2} \large\}+\frac{\rho_0}{2}\large\{||\baru_{c,h}^{k+1}||^2_{L^2}-||\ub_{c,h}^k||^2_{L^2}+||\baru_{c,h}^{k+1}-\ub_{c,h}^k||^2_{L^2} \large\} \nonumber \\
&+ \tau \big(\baru^{k+1}_h \varphi^k_h ,\nabla \mu_h^{k+1}\big)=0.
\end{align}

Take the test function $\mathbf{v}_{c,h}= \tau \mathbf{u}_{c,h}^{k+1}$ and $q_{c,h}= P_{c, h}^{k+1}$ in Eq. \eqref{3Fweak1}, and use Eq. \eqref{InterM2}
\begin{align}\label{S-Ine}
\frac{\rho_0}{2}\large\{||\ub_{c,h}^{k+1}||^2_{L^2}&-||\baru_{c,h}^{k+1}||^2_{L^2}+||\ub_{c,h}^{k+1}-\baru_{c,h}^{k+1}||^2_{L^2} \large\} +  \tau   a_c(\ub_{c,h}^{k+1},\ub_{c,h}^{k+1}) \non\\
&+  \tau \int_{\Gamma_{cm}} P_{m,h}^{k+1} (\mathbf{u}_{c,h}^{k+1}\cdot \mathbf{n}_{cm}) dS =0.
\end{align}

Similarly, by using the intermediate velocity in Eq. \eqref{InterM1}, taking the test functions
$\mathbf{v}_{m,h}=\tau \ub_{m,h}^{k+1}$ and $q_{m,h}= \tau P_{m,h}^{k+1}$ in Eq. \eqref{3Fweak2}, one obtains that
\begin{align}\label{Darcy-E1}
&\frac{\rho_0}{2\chi}\large\{||\ub_{m,h}^{k+1}||^2_{L^2}-||\baru_{m,h}^{k+1}||^2_{L^2}+||\ub_{m,h}^{k+1}-\baru_{m,h}^{k+1}||^2_{L^2} \large\} +\tau ||\sqrt{\nu/\Pi}\ub_{m,h}^{k+1}||_{L^2}^2 \non \\
&-\tau \int_{\Gamma_{cm}} \ub_{c,h}^{k+1} \cdot \mathbf{n}_{cm} P_{m,h}^{k+1}\, dS =0.
\end{align}

Finally summing up Eqs. \eqref{CH-E1}, \eqref{InterMIne}, \eqref{S-Ine} and \eqref{Darcy-E1}, we obtain the modified energy law
\begin{align*}
&\mathcal{E}^{k+1}-\mathcal{E}^k + \tau ||\sqrt{M}\nabla \mu_h^{k+1}||_{L^2}^2+ \tau  a_c(\ub_{c,h}^{k+1},\ub_{c,h}^{k+1})+ \tau ||\sqrt{\nu/\Pi}\ub_{m,h}^{k+1}||_{L^2}^2 \non \\
& \leq -\frac{\gamma \epsilon}{2}||\nabla(\ar1-\var0)||_{L^2}^2 -\frac{\rho_0}{4}||\ub_{c,h}^{k+1}-\ub_{c,h}^k||_{L^2}^2-\frac{\rho_0}{4\chi}||\ub_{m,h}^{k+1}-\ub_{m,h}^k||_{L^2}^2,
\end{align*}
where the elementary inequality $2(a^2+b^2)\ge (a+b)^2$ has been used. 

We also observe that our scheme is mass conservative by simply taking the test function in the phase-field equation to be 1.

This completes the proof. 
\qed
\end{proof}

In the scheme (PD), the Darcy equations are solved in the primitive velocity-pressure formalism Eq. \eqref{3Fweak2}. It is natural to solve the Darcy equations using the pressure alone as the primary variable, since the Darcy pressure (more precisely, the hydraulic head) is of practical importance in applications of flow in porous media. Moreover, there are efficient fast solvers for Poisson equation. By solving for $\ub_{m}^{k+1}$ from Eq. \eqref{InterS2} and substituting the resulting expression into Eq. \eqref{InterS3}, one can solve the Darcy equation \eqref{3Fweak2} (coupled with the Stokes equation \eqref{3Fweak1}) via

\noindent find  $P_{m,h}^{k+1} \in M_m^h$ such that for any $q_{m,h}\in  M_m^h$,
\begin{align}
&\Big(\frac{\tau \Pi \chi}{\rho_0 \Pi+\tau \nu \chi} \nabla P^{k+1}_{m,h}-\frac{\rho_0 \Pi}{\rho_0 \Pi+\tau \nu \chi} \baru^{k+1}_{m,h}, \nabla q_{m,h}\Big)_m- \int_{\Gamma_{cm}} \ub_{c,h}^{k+1} \cdot \mathbf{n}_{cm} q_{m,h}\, dS=0 \label{4Fweak2}.
\end{align}
Then the Darcy velocity at time level $k+1$ is recovered via the projection of the algebraic equation 
\begin{eqnarray}
&&\frac{\rho_0}{\chi} \frac{\ub_{m,h}^{k+1}-\ub_{m,h}^k}{\tau}+ \frac{\nu(\varphi_{m,h}^{k})}{\Pi}\mathbf{u}_{m,h}^{k+1}
+\nabla P_{m,h}^{k+1}  +\vp_{m,h}^k \nabla \mu_{m,h}^{k+1}
=0.
\label{4Fweak5}
\end{eqnarray}

% 3.2
\subsection{A fully decoupled numerical scheme (FD)}
In the scheme \eqref{3Fweak3}--\eqref{3Fweak2}, the Darcy equation is still coupled with the Stokes equation.  We present here a fully decoupled scheme such that the order parameter, the Darcy pressure and the Stokes velocity can be calculated independently while maintaining the desired energy stability.  In the scheme (FD) below,  we consider using the domain decomposition method to decouple the Darcy-Stokes system, which has been  studied intensively for single phase flow, for instance,  in \cite{CR08,DMQ2002, DiQu2003, DiQu2009, LSY2002, CGHW2010, CGW2010, CGHW2011, ChGHW2011, CGSW2013,  CR09, CR12, CGR13}.
In the following scheme (FD), the solution to Darcy system is firstly computed,
and then the solution of Stokes system is computed after $P_{m,h}^{k+1}$ is obtained.

The fully decoupled scheme (FD) reads as follows: \\
\noindent Step 1: Cahn-Hilliard equation: find $\varphi_h^{k+1} \in Y_h$ and $\mu_h^{k+1} \in Y_h$ such that for any $v_h, \phi_h \in  Y_h$,
\begin{eqnarray}
&&( \delta_t\varphi^{k+1}_h,v_h)+({\rm M}(\vp^k_h) \nabla \mu^{k+1}_h,\nabla v_h)-(\baru^{k+1}_h \varphi^k_h ,\nabla v_h)=0, \label{5Fweak3}\\
&&\gamma \left[\frac{1}{\epsilon}(f(\varphi^{k+1}_h, \varphi^k_h),\phi_h)+\epsilon(\nabla\varphi^{k+1}_h,\nabla \phi_h) \right]-(\mu^{k+1}_h,\phi_h)=0, \label{5Fweak4}
\end{eqnarray}
where $f(\varphi^{k+1}_h, \varphi^k_h)=(\varphi^{k+1}_h)^3-\varphi^k_h$, and the intermediate velocity $\baru^{k+1}_h$ in Eq. \eqref{5Fweak3} is defined as
\begin{eqnarray}\label{DefVel}
\baru^{k+1}_h=\left\{
\begin{aligned}
&\baru_{m,h}^{k+1}, \quad x \in \Omega_m, \\
&\baru_{c,h}^{k+1}, \quad x \in \Omega_c.
\end{aligned}
\right.
\end{eqnarray}
Here $\baru_{m,h}^{k+1}$ and $\baru_{c,h}^{k+1}$ are defined through the following equations
\begin{align}
& \frac{\rho_0}{\chi}\frac{\baru_{m,h}^{k+1}-\ub_{m,h}^k}{\tau}+\varphi_{m,h}^{k}\nabla \mu_{m,h}^{k+1}=0, \label{InterMV11} \\
& \rho_0 \frac{\baru_{c,h}^{k+1}-\ub_{c,h}^k}{\tau}+\varphi_{c,h}^{k}\nabla \mu_{c,h}^{k+1}=0. \label{InterMV22}
\end{align}

\noindent Step 2: Darcy equation:  find $\ub_{m, h}^{k+1} \in \mathbf{X}_{m}^h$ and $P_{m,h}^{k+1} \in M_m^h$ such that for any $\mathbf{v}_{m,h}\in  \mathbf{X}_{m}^h$ and $q_{m,h}\in  M_m^h$,
\begin{eqnarray}
&&\left( \frac{\rho_0}{\chi}\frac{\ub_{m,h}^{k+1}-\ub_{m,h}^k}{\tau}+\frac{\nu(\varphi_{m,h}^{k})}{\Pi}\mathbf{u}_{m,h}^{k+1}
+ \nabla P_{m,h}^{k+1}  +\vp_{m,h}^k \nabla \mu_{m,h}^{k+1},\mathbf{v}_{m,h}\right)_m
=0.
\label{5Fweak2} \\
&& \beta\tau\left(\nabla  P_{m,h}^{k+1},\nabla q_{m,h}\right)_m-\Big(\ub_{m,h}^{k+1}, \nabla q_{m,h}\Big)_m- \int_{\Gamma_{cm}} \ub_{c,h}^{k} \cdot \mathbf{n}_{cm} q_{m,h}\, dS=0. \label{5Fweak5}
\end{eqnarray}

\noindent Step 3: Stokes equation: find $\ub_{c, h}^{k+1} \in \mathbf{X}_{c}^h$ and $P_{c,h}^{k+1} \in M_c^h$ such that for any $\mathbf{v}_{c,h}\in  \mathbf{X}_{c}^h$ and $q_{c,h}\in  M_c^h$,
\begin{eqnarray}
 &&\rho_0( \delta_t\ub_{c,h}^{k+1},\mathbf{v}_{c,h} )_c  + a_c(\ub_{c,h}^{k+1},\mathbf{v}_{c,h}) + b_c(\mathbf{v}_{c,h}, P_{c,h}^{k+1})
+  \int_{\Gamma_{cm}} P_{m,h}^{k+1} (\mathbf{v}_{c,h}\cdot \mathbf{n}_{cm}) dS\non\\
&& - b_c( \ub_{c,h}^{k+1}, q_{c,h})_c +(\vp_{c,h}^k \nabla \mu_{c,h}^{k+1},  \mathbf{v}_{c,h} )_c =0,  \label{5Fweak1}
\end{eqnarray}
where we one may recall the definition of $a_c$ and $b_c$ from \eqref{AC} and \eqref{BC}.

Note that a first order stabilization term has been added to the  equation \eqref{5Fweak5}.
 The parameter $\beta>0$ will be a suitable constant that
only depends on the geometry of $\Omega_m$ and $\Omega_c$.
By using the domain decomposition,  the Darcy-Stokes system can be solved in a decoupled manner and legacy codes can be used in each of those steps. The scheme can also be regarded
as one of implicit-explicit(IMEX) schemes. Let us define the interface term:
\begin{equation}
{\mathcal{E}_{\Gamma}} = - \int_{\Gamma_{cm}}(\ub_{c,h}^{k+1}-\ub_{c,h}^{k}) \cdot \mathbf{n}_{cm} P_{m,h}^{k+1}\, dS,
\end{equation}
and we need the following lemma to bound this term.

\begin{lemma}\label{Lemint1}
Suppose $\mathbf{v}_{c,h}\in \mathbf{X}_c^h$ satisfies
\be
  (\nabla\cdot \mathbf{v}_{c,h},q_{c,h})_c=0, \quad \forall q_{c,h}\in M_c^h, \label{vdiv}
\ee
then
\begin{equation}
\left| \int_{\Gamma_{cm}}\mathbf{v}_{c,h} \cdot \mathbf{n}_{cm} w \, dS\right|\le C\|\nabla w\|_{L^2(\Omega_m)}\|\mathbf{v}_{c,h}\|_{L^2(\Omega_c)}, \forall w \in  X_m.
 \end{equation}
\end{lemma}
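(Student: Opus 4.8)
The plan is to estimate the boundary integral by converting it into a volume integral over $\Omega_m$ via a suitable extension, using the divergence-free-in-the-discrete-sense hypothesis \eqref{vdiv} to control the resulting term. First I would pick a smooth (or $H^1$) extension $\tilde w$ of the trace of $w$ on $\Gamma_{cm}$ into the conduit domain $\Omega_c$, with the standard bound $\|\tilde w\|_{H^1(\Omega_c)} \le C\|w\|_{H^1(\Omega_m)}$; since $w \in X_m = H^1(\Omega_m)\cap L^2_0(\Omega_m)$, the Poincar\'e inequality on $\Omega_m$ gives $\|w\|_{H^1(\Omega_m)}\le C\|\nabla w\|_{L^2(\Omega_m)}$, so in fact $\|\tilde w\|_{H^1(\Omega_c)}\le C\|\nabla w\|_{L^2(\Omega_m)}$. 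On $\Gamma_c = \partial\Omega_c \setminus \Gamma_{cm}$ the function $\mathbf{v}_{c,h}$ vanishes (it lies in $\mathbf{X}_c^h \subset \mathbf{H}_{c,0}$), so applying the divergence theorem to $\mathbf{v}_{c,h}\,\tilde w$ on $\Omega_c$ yields
\begin{align*}
\int_{\Gamma_{cm}} \mathbf{v}_{c,h}\cdot\mathbf{n}_{cm}\, w \, dS
= \int_{\Omega_c} (\nabla\cdot\mathbf{v}_{c,h})\,\tilde w \, dx + \int_{\Omega_c} \mathbf{v}_{c,h}\cdot\nabla\tilde w \, dx,
\end{align*}
where I have used that $\mathbf{n}_{cm}$ is the outward normal of $\Omega_c$ on $\Gamma_{cm}$ and that the boundary contribution over $\Gamma_c$ drops out.

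The second term on the right is immediately bounded by $\|\mathbf{v}_{c,h}\|_{L^2(\Omega_c)}\|\nabla\tilde w\|_{L^2(\Omega_c)} \le C\|\mathbf{v}_{c,h}\|_{L^2(\Omega_c)}\|\nabla w\|_{L^2(\Omega_m)}$, which is exactly the desired form. The first term is where hypothesis \eqref{vdiv} enters: $\nabla\cdot\mathbf{v}_{c,h}$ is not pointwise zero in general, but it is $L^2$-orthogonal to all of $M_c^h$. The idea is to replace $\tilde w$ by $\tilde w - \Pi_h \tilde w$, where $\Pi_h$ is the $L^2(\Omega_c)$-projection onto $M_c^h$ (or a Cl\'ement/Scott--Zhang-type interpolant landing in $M_c^h$), so that $\int_{\Omega_c}(\nabla\cdot\mathbf{v}_{c,h})\Pi_h\tilde w\, dx = 0$ by \eqref{vdiv}; hence $\int_{\Omega_c}(\nabla\cdot\mathbf{v}_{c,h})\tilde w\, dx = \int_{\Omega_c}(\nabla\cdot\mathbf{v}_{c,h})(\tilde w - \Pi_h\tilde w)\,dx$. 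Then I would bound this by $\|\nabla\cdot\mathbf{v}_{c,h}\|_{L^2(\Omega_c)}\|\tilde w - \Pi_h\tilde w\|_{L^2(\Omega_c)}$; the first factor is controlled by $\|\mathbf{v}_{c,h}\|_{H^1(\Omega_c)}$ and—if one wants the clean $L^2$ bound on $\mathbf{v}_{c,h}$ as stated—one instead keeps $\nabla\cdot\mathbf{v}_{c,h}$ paired against $\tilde w - \Pi_h\tilde w$ and integrates by parts once more, moving the derivative back onto $\tilde w - \Pi_h\tilde w$. Doing an integration by parts, $\int_{\Omega_c}(\nabla\cdot\mathbf{v}_{c,h})(\tilde w - \Pi_h\tilde w)\, dx = -\int_{\Omega_c}\mathbf{v}_{c,h}\cdot\nabla(\tilde w - \Pi_h\tilde w)\, dx + \int_{\partial\Omega_c}(\mathbf{v}_{c,h}\cdot\mathbf{n})(\tilde w - \Pi_h\tilde w)\, dS$; combining with the boundary term already present and using $\mathbf{v}_{c,h}=\mathbf 0$ on $\Gamma_c$, one ends up controlling everything by $\|\mathbf{v}_{c,h}\|_{L^2(\Omega_c)}$ times $H^1$-type norms of $\tilde w$ and its projection, all of which are $\le C\|\nabla w\|_{L^2(\Omega_m)}$.

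The main obstacle I anticipate is the bookkeeping around the discrete divergence condition: one cannot literally integrate by parts to kill $\nabla\cdot\mathbf{v}_{c,h}$ (it is not zero), so the argument must route through the orthogonality \eqref{vdiv} against $M_c^h$, and making the two integration-by-parts steps consistent—keeping the boundary term over $\Gamma_{cm}$ only and ensuring the norm on $\mathbf{v}_{c,h}$ stays at the $L^2$ level rather than $H^1$—requires care about which interpolant $\Pi_h$ is used and what approximation and stability properties it has (namely $\|\tilde w - \Pi_h\tilde w\|_{L^2(\Omega_c)} \le C h\|\tilde w\|_{H^1(\Omega_c)}$ and $\|\nabla\Pi_h\tilde w\|_{L^2(\Omega_c)}\le C\|\tilde w\|_{H^1(\Omega_c)}$). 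An alternative, perhaps cleaner route is to use the inf-sup/stability of the Stokes pair $(\mathbf{X}_c^h, M_c^h)$ together with \eqref{vdiv} to deduce a Fortin-type property, or simply to observe that \eqref{vdiv} for Taylor--Hood-type spaces forces $\nabla\cdot\mathbf{v}_{c,h}$ to be orthogonal to continuous piecewise polynomials and exploit that directly; in any case the constant $C$ depends only on the geometry of $\Omega_c$, $\Omega_m$, $\Gamma_{cm}$ and the shape-regularity of the mesh, which is exactly what the later use of the lemma requires.
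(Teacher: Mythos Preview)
Your overall strategy---extend $w$ to $\tilde w\in H^1(\Omega_c)$ with $\|\tilde w\|_{H^1(\Omega_c)}\le C\|\nabla w\|_{L^2(\Omega_m)}$, apply Green's formula, and exploit the discrete divergence constraint \eqref{vdiv} by inserting a projection $\Pi_h\tilde w\in M_c^h$---is exactly the paper's route. The paper writes
\[
\int_{\Gamma_{cm}}\mathbf{v}_{c,h}\cdot\mathbf{n}_{cm}\,w\,dS=(\nabla\cdot\mathbf{v}_{c,h},\tilde w)_c+(\mathbf{v}_{c,h},\nabla\tilde w)_c,
\]
replaces $\tilde w$ by $\tilde w-W_h$ in the first term (with $W_h$ the $L^2$ projection of $\tilde w$ into $M_c^h\cap L^2_0(\Omega_c)$), and uses $\|\tilde w-W_h\|_{L^2}\le Ch\|\tilde w\|_{H^1}$.

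Where you diverge is at the closing step. After arriving at $(\nabla\cdot\mathbf{v}_{c,h},\tilde w-\Pi_h\tilde w)_c$, the paper simply bounds this by $\|\nabla\cdot\mathbf{v}_{c,h}\|_{L^2}\cdot Ch\|\tilde w\|_{H^1}$ and then invokes the \emph{inverse inequality} on the quasi-uniform mesh,
\[
h\|\nabla\cdot\mathbf{v}_{c,h}\|_{L^2(\Omega_c)}\le C\|\mathbf{v}_{c,h}\|_{L^2(\Omega_c)},
\]
which immediately gives the stated $L^2$ bound on $\mathbf{v}_{c,h}$. You instead try to avoid the $H^1$ norm on $\mathbf{v}_{c,h}$ by integrating by parts a second time. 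That step is problematic: it reintroduces a boundary integral $\int_{\Gamma_{cm}}(\mathbf{v}_{c,h}\cdot\mathbf{n}_{cm})(\tilde w-\Pi_h\tilde w)\,dS$ of the same type you started with, and your sketch (``combining with the boundary term already present\ldots one ends up controlling everything'') does not explain how this new boundary term is estimated by $\|\mathbf{v}_{c,h}\|_{L^2(\Omega_c)}$. Closing that route would require a discrete trace inequality on $\mathbf{v}_{c,h}$ together with an $O(h^{1/2})$ trace bound for $\tilde w-\Pi_h\tilde w$, plus $H^1$-stability of $\Pi_h$---none of which you state. So as written the argument has a gap; the fix is to drop the second integration by parts and use the inverse estimate, exactly as the paper does.
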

\begin{proof} Since $w \in  X_m:=H^1(\Omega_m)\cap L_0^2(\Omega_m)$, there exists  an extension $W\in {H}^1(\Omega)$ such that: $W|_{\Omega_m}=w$,  $ W|_{\Gamma_{cm}}=w|_{\Gamma_{cm}}$ and
\begin{equation}
  \| W\|_{H^1(\Omega)}\le C\|\nabla w\|_{L^2(\Omega_m)}
\end{equation}
where $C$ is a constant independent of 
   Then by Green's formula,
\begin{equation}
\int_{\Gamma_{cm}}\mathbf{v}_{c,h} \cdot \mathbf{n}_{cm} w \, dS=(\nabla \cdot \mathbf{v}_{c,h}, W)_c+(\nabla W, \mathbf{v}_{c,h})_c.
\end{equation}
Let $W_h$ be the $L^2$ projection of $W$ on the space $M_c^h\cap L_0^2(\Omega_c)$. We have
\begin{equation}
   \|W-W_h\|_{L^2}\le Ch\| W\|_{H^1}.
\end{equation}
Consequently, thanks to the inverse estimate,
\begin{equation}
|(\nabla \cdot \mathbf{v}_{c,h}, W)_c|=|(\nabla \cdot \mathbf{v}_{c,h}, W-W_h)_c|\le Ch\|\nabla \cdot \mathbf{v}_{c,h}\|_{L^2}\|W\|_{H^1}\le C \| \mathbf{v}_{c,h}\|_{L^2}\|W\|_{H^1}.
\end{equation}
Now by using the extension theorem,
\begin{equation}
\int_{\Gamma_{cm}}\mathbf{v}_{c,h} \cdot \mathbf{n}_{cm} w \, dS\le  C \| \mathbf{v}_{c,h}\|_{L^2}\| W\|_{H^1}\le  C \| \mathbf{v}_{c,h}\|_{L^2(\Omega_c)} \|\nabla w\|_{L^2(\Omega_m)}.
\end{equation}
This proves the lemma.
\qed
\end{proof}

 Following the same argument as in the proof of Theorem \ref{PDsta}, we have the following solvability and stability result:
 %%%%% Theorem 2
\begin{theorem}
\label{S2stab-2lem}
The scheme (FD) \eqref{5Fweak3}-\eqref{5Fweak1} is unconditionally uniquely solvable and mass conservative.
There exists a constant $\beta$ depending only on the geometry and $\rho_0$ such that  the following modified energy law holds
%\begin{align}
%&\mathcal{E}^{k+1}-\mathcal{E}^k + dt ||\sqrt{M}\nabla \mu_h^{k+1}||_{L^2}^2+ 2 dt ||\sqrt{\nu}\mathbb{D}(\ub_{c,h}^{k+1})||_{L^2}^2
%+ dt \sum_{i=1}^{d-1}\frac{\alpha_{BJSJ}}{\sqrt{{\rm trace} (\Pi)}}\int_{\Gamma_{cm}}\nu(\ub_{c,h}^{k+1}\cdot\btau_i)^2 dS \non \\
%&+ dt ||\sqrt{\nu/\Pi}\ub_{m,h}^{k+1}||_{L^2}^2 \leq -\frac{\gamma \epsilon}{2}||\nabla(\ar1-\var0)||_{L^2}^2 %-\frac{\rho_0}{2}||\ub_{c,h}^{k+1}-\ub_{c,h}^k||_{L^2}^2-\frac{\rho_0}{2\chi}||\ub_{m,h}^{k+1}-\ub_{m,h}^k||_{L^2}^2, \label{S2stab}
%\end{align}
\begin{align} 
&\mathcal{E}^{k+1} + \tau ||\sqrt{M}\nabla \mu_h^{k+1}||_{L^2}^2+ \tau  a_c(\ub_{c,h}^{k+1},\ub_{c,h}^{k+1})+ \tau ||\sqrt{\nu/\Pi}\ub_{m,h}^{k+1}||_{L^2}^2 + \frac{\beta\tau^2}{2}||\nabla P_{m,h}^{k+1}||^2_{L^2}\non\\
&
+ \frac{\tau^2}{4\rho_0}\left(\chi\|\varphi^k_{m,h}\nabla \mu_{m,h}^{k+1}\|^2_{L^2}+\|\varphi^k_{c,h}\nabla \mu_{c,h}^{k+1}\|^2_{L^2}\right)\non\\
& \leq  \mathcal{E}^k-\frac{\rho_0}{6\chi} ||\ub_{m,h}^{k+1}-\ub_{m,h}^{k}||^2_{L^2}-\frac{\rho_0}{12}||\ub_{c,h}^{k+1}-\ub_{c,h}^{k}||^2_{L^2}-\frac{\gamma \epsilon}{2}||\nabla(\ar1-\var0)||_{L^2}^2 . \label{EDDM2}
\end{align}
\end{theorem}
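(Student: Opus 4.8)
The plan is to mimic the proof of Theorem~\ref{PDsta}, handling the extra terms that arise from the pressure-stabilization in \eqref{5Fweak5} and from the explicit treatment of the interface flux $\ub_{c,h}^{k}\cdot\mathbf{n}_{cm}$ in the Darcy step. For unique solvability, note again that substituting \eqref{InterMV11}--\eqref{InterMV22} into \eqref{5Fweak3}--\eqref{5Fweak4} decouples the Cahn--Hilliard part, whose unique solvability follows from the convex-splitting/gradient-flow argument as before. Once $\mu_h^{k+1}$ is known, \eqref{5Fweak2}--\eqref{5Fweak5} is a \emph{square linear system} for $(\ub_{m,h}^{k+1},P_{m,h}^{k+1})$, now fully decoupled from the Stokes system; taking differences of two solutions $(\E_m^u,E_m^p)$, testing \eqref{5Fweak2} with $\mathbf{v}_{m,h}=\E_m^u$ and \eqref{5Fweak5} with $q_{m,h}=E_m^p$, and subtracting gives $\frac{\rho_0}{\chi\tau}\|\E_m^u\|_{L^2}^2 + a_m(\E_m^u,\E_m^u) + \beta\tau\|\nabla E_m^p\|_{L^2}^2 = 0$, whence $\E_m^u=0$ and $\nabla E_m^p=0$; since $E_m^p\in M_m^h\subset L^2_0(\Omega_m)$ we conclude $E_m^p=0$. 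Then \eqref{5Fweak1} is the same Stokes-type linear system as in Theorem~\ref{PDsta} (now with $P_{m,h}^{k+1}$ a known datum), and its unique solvability follows from the classical inf-sup argument exactly as before. Mass conservation again follows by choosing the test function $v_h=1$ in \eqref{5Fweak3}.

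For the energy law, I would reproduce the four ingredients of the earlier proof. First, testing \eqref{5Fweak3} with $v_h=\tau\mu_h^{k+1}$ and \eqref{5Fweak4} with $\phi_h=\ar1-\var0$ and using the convexity inequality \eqref{NonIne} yields \eqref{CH-E1} verbatim. Second, \eqref{InterMV11}--\eqref{InterMV22} give the intermediate-velocity identity \eqref{InterMIne}. Third, testing \eqref{5Fweak1} with $\mathbf{v}_{c,h}=\tau\ub_{c,h}^{k+1}$, $q_{c,h}=P_{c,h}^{k+1}$ and using \eqref{InterMV22} gives the Stokes identity \eqref{S-Ine}. The new work is in the Darcy step: testing \eqref{5Fweak2} with $\mathbf{v}_{m,h}=\tau\ub_{m,h}^{k+1}$, using $\ub_{m,h}^{k+1}=\baru_{m,h}^{k+1}-\frac{\chi\tau}{\rho_0}\varphi_{m,h}^k\nabla\mu_{m,h}^{k+1}$ to rewrite the capillary term, and testing \eqref{5Fweak5} with $q_{m,h}=\tau P_{m,h}^{k+1}$ to eliminate $(\ub_{m,h}^{k+1},\nabla P_{m,h}^{k+1})_m$, produces a discrete Darcy energy identity containing $\frac{\rho_0}{2\chi}\{\|\ub_{m,h}^{k+1}\|^2-\|\baru_{m,h}^{k+1}\|^2+\|\ub_{m,h}^{k+1}-\baru_{m,h}^{k+1}\|^2\}$, the dissipation $\tau\|\sqrt{\nu/\Pi}\ub_{m,h}^{k+1}\|_{L^2}^2$, the stabilization $\beta\tau^2\|\nabla P_{m,h}^{k+1}\|_{L^2}^2$, an interface term $-\tau\int_{\Gamma_{cm}}\ub_{c,h}^{k}\cdot\mathbf{n}_{cm}P_{m,h}^{k+1}\,dS$ (note the explicit $k$, not $k+1$), and a term of the form $+\frac{\chi\tau^2}{2\rho_0}\|\varphi_{m,h}^k\nabla\mu_{m,h}^{k+1}\|_{L^2}^2$ coming from the rewriting. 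Summing all four identities, the telescoping parts build $\mathcal{E}^{k+1}-\mathcal{E}^k$ and the capillary cross-term $\tau(\baru_h^{k+1}\varphi_h^k,\nabla\mu_h^{k+1})$ cancels, leaving the leftover interface term $\mathcal{E}_\Gamma = -\int_{\Gamma_{cm}}(\ub_{c,h}^{k+1}-\ub_{c,h}^{k})\cdot\mathbf{n}_{cm}P_{m,h}^{k+1}\,dS$ (the $k+1$ part of the Stokes interface term pairs against the $k$ part of the Darcy one).

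The heart of the proof is controlling $\mathcal{E}_\Gamma$. Here I invoke Lemma~\ref{Lemint1}: since $\ub_{c,h}^{k+1}$ and $\ub_{c,h}^{k}$ are both discretely divergence-free in the sense of \eqref{vdiv} (from the $q_{c,h}$-equation in \eqref{5Fweak1} at levels $k$ and $k+1$) and $P_{m,h}^{k+1}\in X_m$, the lemma gives $|\mathcal{E}_\Gamma|\le C\|\nabla P_{m,h}^{k+1}\|_{L^2(\Omega_m)}\|\ub_{c,h}^{k+1}-\ub_{c,h}^{k}\|_{L^2(\Omega_c)}$ with $C$ depending only on the geometry. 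Applying Young's inequality splits this as $\le \frac{\rho_0}{12}\|\ub_{c,h}^{k+1}-\ub_{c,h}^{k}\|_{L^2}^2 + \frac{C^2\tau}{\rho_0}\cdot\frac{3}{\tau}\|\nabla P_{m,h}^{k+1}\|_{L^2}^2$; the crucial point is that the interface term carries a full factor $\tau$ (from the Darcy equation it multiplies $\tau$), so the second piece is $O(\tau)\|\nabla P_{m,h}^{k+1}\|_{L^2}^2$ and can be absorbed into the stabilization term $\beta\tau^2\|\nabla P_{m,h}^{k+1}\|_{L^2}^2$ \emph{provided $\beta$ is chosen large enough relative to $C$ and $\rho_0$} — this is exactly the $\beta$ "depending only on the geometry and $\rho_0$" in the statement, and it is an \emph{unconditional} choice (no restriction on $\tau$). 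Likewise the $\|\ub_{m,h}^{k+1}-\baru_{m,h}^{k+1}\|^2$ and $\|\baru_{m,h}^{k+1}-\ub_{m,h}^k\|^2$ terms, together with $2(a^2+b^2)\ge(a+b)^2$, control $\|\ub_{m,h}^{k+1}-\ub_{m,h}^k\|^2$ up to the indicated constants $\frac{\rho_0}{6\chi}$ etc., and the leftover half of the stabilization and of the $\|\varphi\nabla\mu\|^2$ terms survives on the left side of \eqref{EDDM2}. The one delicate bookkeeping issue — and the step I expect to demand the most care — is tracking the several $O(\tau^2)$ capillary and pressure quantities so that after absorption the coefficients come out exactly as $\frac{\beta\tau^2}{2}$, $\frac{\tau^2}{4\rho_0}(\chi\|\cdots\|^2+\|\cdots\|^2)$, $\frac{\rho_0}{6\chi}$ and $\frac{\rho_0}{12}$ claimed in \eqref{EDDM2}; the inequalities themselves are elementary, but the constants must be chased consistently. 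Finally, taking $v_h=1$ gives mass conservation, completing the proof. \qed
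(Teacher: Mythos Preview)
Your overall strategy is correct and is essentially equivalent to the paper's, but there is one slip and one point of confusion worth flagging. The relation you invoke, $\ub_{m,h}^{k+1}=\baru_{m,h}^{k+1}-\frac{\chi\tau}{\rho_0}\varphi_{m,h}^k\nabla\mu_{m,h}^{k+1}$, is wrong: \eqref{InterMV11} says $\ub_{m,h}^{k}=\baru_{m,h}^{k+1}+\frac{\chi\tau}{\rho_0}\varphi_{m,h}^k\nabla\mu_{m,h}^{k+1}$, i.e.\ the intermediate velocity is tied to the \emph{old} velocity, not the new one. With the correct substitution, the capillary term in \eqref{5Fweak2} cancels exactly and the Darcy identity contains no extra $\frac{\chi\tau^2}{2\rho_0}\|\varphi_{m,h}^k\nabla\mu_{m,h}^{k+1}\|^2$ term; that quantity is already sitting in \eqref{InterMIne} as $\frac{\rho_0}{2\chi}\|\baru_{m,h}^{k+1}-\ub_{m,h}^k\|^2$. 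So the term is present in the sum, but you have mislocated its source.

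Organizationally the paper proceeds a little differently. Rather than carrying \eqref{InterMIne} and the $\baru$-based Stokes/Darcy identities \eqref{S-Ine}, it substitutes the definitions \eqref{InterMV11}--\eqref{InterMV22} directly into \eqref{CH-E1} to replace $-\tau(\baru_h^{k+1}\varphi_h^k,\nabla\mu_h^{k+1})$ by $-\tau(\ub_h^{k}\varphi_h^k,\nabla\mu_h^{k+1})+W_1$ with $W_1=\frac{\tau^2}{\rho_0}(\chi\|\varphi_{m,h}^k\nabla\mu_{m,h}^{k+1}\|^2+\|\varphi_{c,h}^k\nabla\mu_{c,h}^{k+1}\|^2)$, and then tests \eqref{5Fweak1}, \eqref{5Fweak2}--\eqref{5Fweak5} in their raw form (no $\baru$ rewriting), producing $\frac{\rho_0}{2}\|\ub_{c,h}^{k+1}-\ub_{c,h}^k\|^2$ and $\frac{\rho_0}{2\chi}\|\ub_{m,h}^{k+1}-\ub_{m,h}^k\|^2$ together with the cross term $\tau(\ub_h^{k+1}-\ub_h^k,\varphi_h^k\nabla\mu_h^{k+1})$. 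A single Young inequality $(a,b)\le\frac{1}{3}a^2+\frac{3}{4}b^2$ on this cross term then delivers the constants $\frac{\rho_0}{6}$, $\frac{\rho_0}{6\chi}$ and $\frac{1}{4}W_1$ in one stroke. Your route via \eqref{InterMIne} is algebraically the same (one checks that $\frac{\rho_0}{2\chi}(\|\baru_m-\ub_m^k\|^2+\|\ub_m^{k+1}-\baru_m\|^2)$ and its conduit analogue add up exactly to the paper's $W_2$), but the paper's packaging makes the coefficient chase you were worried about entirely mechanical. The treatment of the interface term via Lemma~\ref{Lemint1} and the choice of $\beta$ are identical in both approaches.
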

\begin{proof}
Note that the discretization of the Cahn-Hilliard equations is the same in the fully decoupled scheme (FD) and in the scheme (PD). Hence the inequality \eqref{CH-E1} holds for the Eqs. \eqref{5Fweak3} and \eqref{5Fweak4}, which we copy here for completeness
\begin{align}\label{CH-E11}
E(\varphi_h^{k+1})-E(\var0)+\tau ||\sqrt{M}\nabla \mu_h^{k+1}||_{L^2}^2 - \tau(\baru^{k+1}_h \varphi^k_h ,\nabla \mu_h^{k+1}) \leq -\frac{\gamma \epsilon}{2}||\nabla(\ar1-\var0)||_{L^2}^2.
\end{align}
By using the definition of $\baru^{k+1}_h$ in Eqs. \eqref{DefVel}, \eqref{InterMV11} and \eqref{InterMV22},  one can rewrite Eq. \eqref{CH-E11} as follows
\begin{align}\label{CH-E1-2}
E(\varphi_h^{k+1})-E(\var0)+\tau ||\sqrt{M}\nabla \mu_h^{k+1}||_{L^2}^2 - \tau(\ub^{k}_h \varphi^k_h ,\nabla \mu_h^{k+1})+ W_1 \leq -\frac{\gamma \epsilon}{2}||\nabla(\ar1-\var0)||_{L^2}^2,
\end{align}
where
\begin{equation}\label{Wl}
W_1=\frac{\tau^2}{\rho_0}\left(\chi\|\varphi^k_{m,h}\nabla \mu_{m,h}^{k+1}\|^2+\|\varphi^k_{c,h}\nabla \mu_{c,h}^{k+1}\|^2\right).
\end{equation}
Take the test function $\mathbf{v}_{c,h}= \tau \mathbf{u}_{c,h}^{k+1}$ and $q_{c,h}= P_{c, h}^{k+1}$ in Eq. \eqref{5Fweak1}, 
\begin{align}\label{S-Ine-2}
&\frac{\rho_0}{2}\large\{||\ub_{c,h}^{k+1}||^2_{L^2}-||\ub_{c,h}^{k}||^2_{L^2}+||\ub_{c,h}^{k+1}-\ub_{c,h}^{k}||^2_{L^2} \large\} +  \tau   a_c(\ub_{c,h}^{k+1},\ub_{c,h}^{k+1}) \non\\
&+  \tau \int_{\Gamma_{cm}} P_{m,h}^{k+1} (\mathbf{u}_{c,h}^{k+1}\cdot \mathbf{n}_{cm}) dS +\tau(\ub^{k+1}_{c,h} \varphi^k_{c,h} ,\nabla \mu_{c,h}^{k+1})_c =0.
\end{align}

Testing Eq. \eqref{5Fweak2} with $\tau \ub_{m,h}^{k+1}$, taking $q_{m,h}= \tau P_{m,h}^{k+1}$ in Eq. \eqref{5Fweak5}, and summing up the results gives us
\begin{align}\label{Darcy-E2}
&\frac{\rho_0}{2\chi}\large\{||\ub_{m,h}^{k+1}||^2_{L^2}-||\ub_{m,h}^{k}||^2_{L^2}+||\ub_{m,h}^{k+1}-\ub_{m,h}^{k}||^2_{L^2} \large\} +\tau ||\sqrt{\nu/\Pi}\ub_{m,h}^{k+1}||_{L^2}^2 + \beta\tau^2||\nabla P_{m,h}^{k+1}||^2_{L^2}\non \\
%&+\frac{\tau}{2}\large\{||\nabla P_{m,h}^{k+1}||^2_{L^2}-||\nabla P_{m,h}^{k+1}||^2_{L^2}+||\nabla P_{m,h}^{k+1}-\nabla P_{m,h}^{k+1}||^2_{L^2} \large\} \non\\
&+\tau(\ub^{k+1}_{m,h} \varphi^k_{m,h} ,\nabla \mu_{m,h}^{k+1})_m-\tau \int_{\Gamma_{cm}} \ub_{c,h}^{k} \cdot \mathbf{n}_{cm} P_{m,h}^{k+1}\, dS =0.
\end{align}
Now summing up the  three estimates \eqref{CH-E1-2}, \eqref{S-Ine-2} and \eqref{Darcy-E2}, we have
\begin{align}
&\mathcal{E}^{k+1}-\mathcal{E}^k + \tau ||\sqrt{M}\nabla \mu_h^{k+1}||_{L^2}^2+ \tau  a_c(\ub_{c,h}^{k+1},\ub_{c,h}^{k+1})+ \tau ||\sqrt{\nu/\Pi}\ub_{m,h}^{k+1}||_{L^2}^2 +W_2  \non \\
&\
 %+ \frac{\tau}{2}\left\{||\nabla P_{m,h}^{k+1}||^2_{L^2}-||\nabla P_{m,h}^{k+1}||^2_{L^2}+||\nabla P_{m,h}^{k+1}-\nabla P_{m,h}^{k+1}||^2_{L^2}\right\}+\frac{\gamma \epsilon}{2}||\nabla(\ar1-\var0)||_{L^2}^2 \non\\
+ \beta\tau^2||\nabla P_{m,h}^{k+1}||^2_{L^2}+\frac{\gamma \epsilon}{2}||\nabla(\ar1-\var0)||_{L^2}^2   \leq  - \tau \int_{\Gamma_{cm}}(\ub_{c,h}^{k+1}-\ub_{c,h}^{k}) \cdot \mathbf{n}_{cm} P_{m,h}^{k+1}\, dS, \label{EDDM1}
\end{align}
where
\begin{align*}
W_2=\frac{\rho_0}{2}||\ub_{c,h}^{k+1}-\ub_{c,h}^{k}||^2_{L^2} +\frac{\rho_0}{2\chi}||\ub_{m,h}^{k+1}-\ub_{m,h}^{k}||^2_{L^2}+ W_1
+\tau(\ub^{k+1}_{ h}-\ub^{k}_{ h}, \varphi^k_{h} \nabla \mu_{h}^{k+1}),
\end{align*}
with $W_1$ defined in Eq. \eqref{Wl}.

Applying Young's inequality $(a,b)\le \frac{a^2}{3}+\frac{3b^2}{4}$ to the term $\tau(\ub^{k+1}_{h}-\ub^{k}_{h}, \varphi^k_{h} \nabla \mu_{h}^{k+1})$, one obtains
\begin{eqnarray}
W_2 \geq  \frac{\rho_0}{6}||\ub_{c,h}^{k+1}-\ub_{c,h}^{k}||^2_{L^2} +\frac{\rho_0}{6\chi}||\ub_{m,h}^{k+1}-\ub_{m,h}^{k}||^2_{L^2}+ \frac{1}{4}W_1.
\end{eqnarray}  
By Lemma \ref{Lemint1}, the right-hand side of inequality \eqref{EDDM1} can be bounded as follows
\begin{equation}
\tau|\mathcal{E}_{\Gamma}|\le C\tau\|\nabla P_{m,h}^{k+1}\|_{L^2}\|\ub_{c,h}^{k+1}-\ub_{c,h}^{k}\|_{L^2}
\le\frac{\rho_0}{12}\|\ub_{c,h}^{k+1}-\ub_{c,h}^{k}\|_{L^2}^2+C_1\tau^2\|\nabla P_{m,h}^{k+1}\|_{L^2}.
\end{equation}
If we impose $\beta \ge2  C_1$ which only depends on the geometry of $\Omega_m$, $\Omega_c$ and $\rho_0$, then one has,
\begin{align}
&\mathcal{E}^{k+1}-\mathcal{E}^k + \tau ||\sqrt{M}\nabla \mu_h^{k+1}||_{L^2}^2+ \tau  a_c(\ub_{c,h}^{k+1},\ub_{c,h}^{k+1})+ \tau ||\sqrt{\nu/\Pi}\ub_{m,h}^{k+1}||_{L^2}^2 + \frac{\beta \tau^2}{2}||\nabla P_{m,h}^{k+1}||^2_{L^2}\non\\
&+\frac{\gamma \epsilon}{2}||\nabla(\ar1-\var0)||_{L^2}^2
+\frac{\rho_0}{12}||\ub_{c,h}^{k+1}-\ub_{c,h}^{k}||^2_{L^2} +\frac{\rho_0}{6\chi}||\ub_{m,h}^{k+1}-\ub_{m,h}^{k}||^2_{L^2}+ \frac{1}{4}W_1\non\\
& \leq  0. \non
\end{align} 
Hence we have established the energy inequality \eqref{EDDM2}.

Finally we comment on the unique solvability of the fully decoupled scheme (FD). The unique solvability of the Cahn-Hilliard equation is the same as in the scheme (PD). The Darcy equations \eqref{5Fweak2}--\eqref{5Fweak5} are unconditionally uniquely solvable by the standard energy estimate,  which does not rely on the inf-sup condition. Then the solvability of the Stokes equation \eqref{5Fweak1} is the same as in the proof of Theorem \ref{PDsta}.
The conservation of mass follows from setting the test function in the phase-filed equation to be 1.
This concludes the proof of Theorem \ref{S2stab-2lem}.
\qed
\end{proof}

It is also possible to formulate this fully decoupled scheme utilizing the Darcy pressure as the primary variable in the porous media instead of the velocity and the pressure following the same argument as the one used to derive the Darcy pressure formulation for the partially decoupled scheme presented at the end of the previous subsection.

\section{Numerical experiments}
In this section, we present some numerical examples to show that our numerical schemes can accurately capture the dynamics of two-phase flow in a karst geometry. In the first numerical example, we demonstrate numerically that our schemes are of first order accuracy in time and are long-time stable. The second example illustrates that a droplet passes through the karst system driven by boundary-injection. In the last numerical example, we show that  a lighter bubble rises and penetrates the domain interface due to buoyancy. All the numerical tests are performed using the free software FreeFem++ \cite{Hecht2012}.

\subsection{Convergence and stability}
In the first numerical test, we verify that our schemes are first-order accurate in time.  The computational domain is $[0, 1] \times [-1, 1]$ with the lower half being the conduit and the upper half being the matrix. The approach that we take for the accuracy test is as follows. We calculate a solution using our numerical schemes with sufficiently small $h=0.01$ and $\tau=0.0001$, and view this solution as an accurate one. We then compare the numerical solutions with larger time step-size to this accurate solution and calculate the error measured in $L^2$ norm. Throughout, the celebrated  Taylor-Hood P2--P1 finite elements are employed for the approximation of velocity and pressure, and the P1-P1 pair is used for the discretization of order parameter and chemical potential.  Hence the temporal error is the dominating factor in the overall error.  As an example, we show the results for the fully decoupled scheme (FD), i.e., Eqs. \eqref{5Fweak3}--\eqref{5Fweak1}. The error behavior for the other scheme is similar,  as far as the accuracy is concerned. 

For simplicity, all the parameters appearing in the system \eqref{HSCH-NSCH1}--\eqref{HSCH-NSCH6} are set to be unity. The initial conditions are $\varphi_0= 0.24 \cos(2\pi x) \cos(2\pi y)+ 0.4 \cos(\pi x) \cos(3 \pi y) +1.0$, $\mathbf{u}_0=(-2 \sin^2(\pi x) \sin(2\pi y), 2 \sin(2 \pi x) \sin^2(\pi y))$. The convergence result is shown in Fig. \ref{ConvTest}. The first order convergence rate in time is observed for the variables $\mathbf{u}_c$, $\mathbf{u}_m$, $p_m$, and $\phi$.
\begin{figure}[h!]
\centering
 \includegraphics[width=0.9\textwidth, natwidth=610,natheight=642]{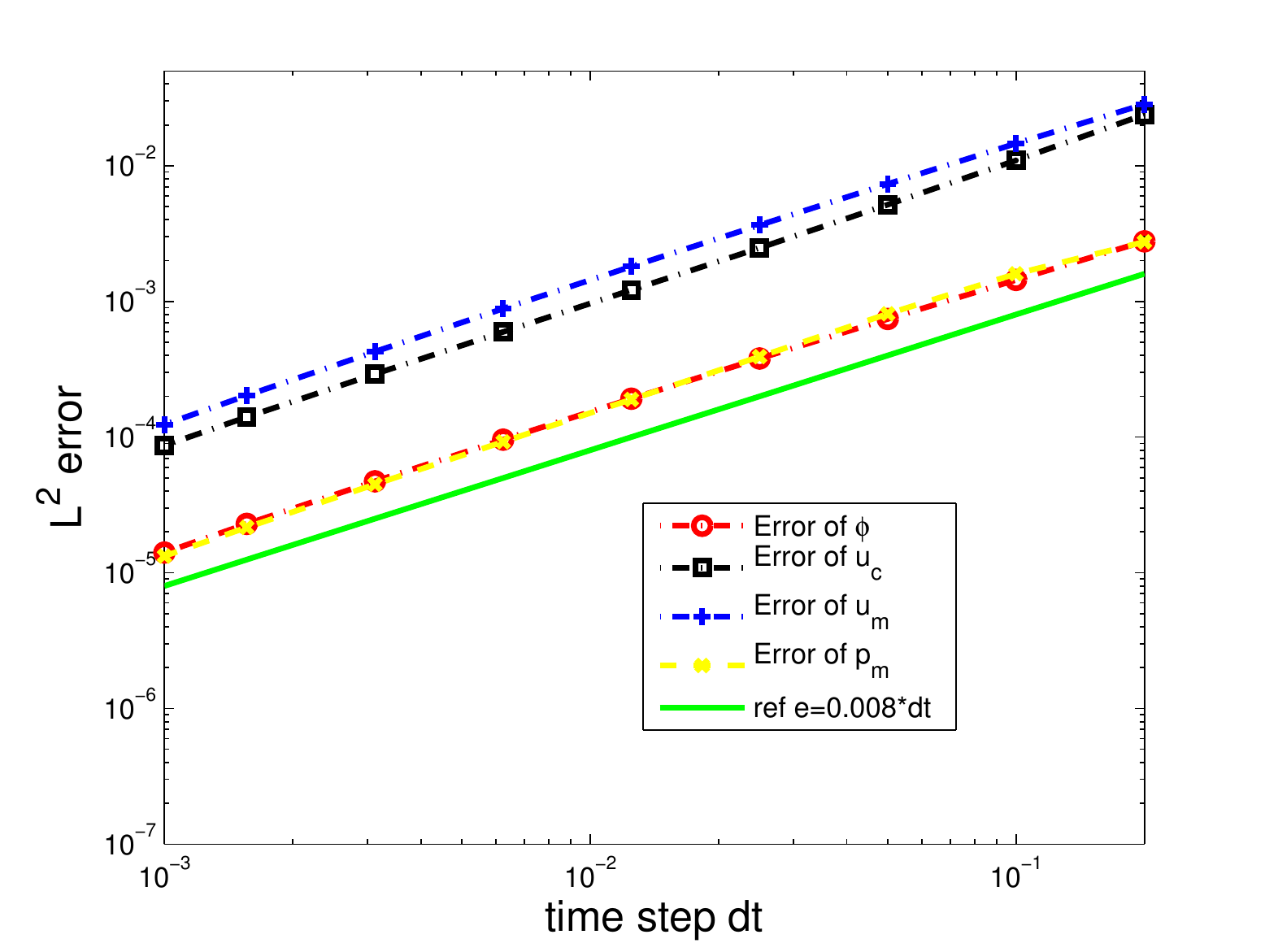}  %original eps
        \caption{Log-Log plot of the error measured in $L^2$ norm as a function of time step $\tau$ for $\mathbf{u}_c$, $\mathbf{u}_m$, $p_m$, and $\phi$. The solid green line is the reference line $e=0.008 \tau$. The final time is $T=1.0$. $h=0.01$. The other parameters are set to be unity.}
 \label{ConvTest}
\end{figure}

Next, we demonstrate numerically that our schemes satisfy discrete energy laws, i.e., the discrete energy $\mathcal{E}^{k}$ defined in \eqref{DisEner} is nonincreasing in time. We perform the classical numerical experiemnt of spinodal decomposition and coarsening. The initial velocities are the same as in the convergence test.  For the initial condition of the order parameter, we take a random field of values $\varphi_0=\bar{\phi}+r(x,y)$ with an average composition $\bar{\phi}=-0.05$ and random $r \in [-0.05, 0.05]$. The parameters in this experiment are taken to be: $\frac{\rho_0}{\chi}=0.01, \epsilon=0.01, \nu_c=\nu_m=0.1, \Pi=1, \gamma=0.1, M=0.1$. The evolution of the discrete energy $\mathcal{E}^{k}$ is shown in Fig. \ref{EnergyEvo} where $h=0.01, \tau=0.1$.

\begin{figure}[h!]
        \centering
                \includegraphics[width=0.9\textwidth]{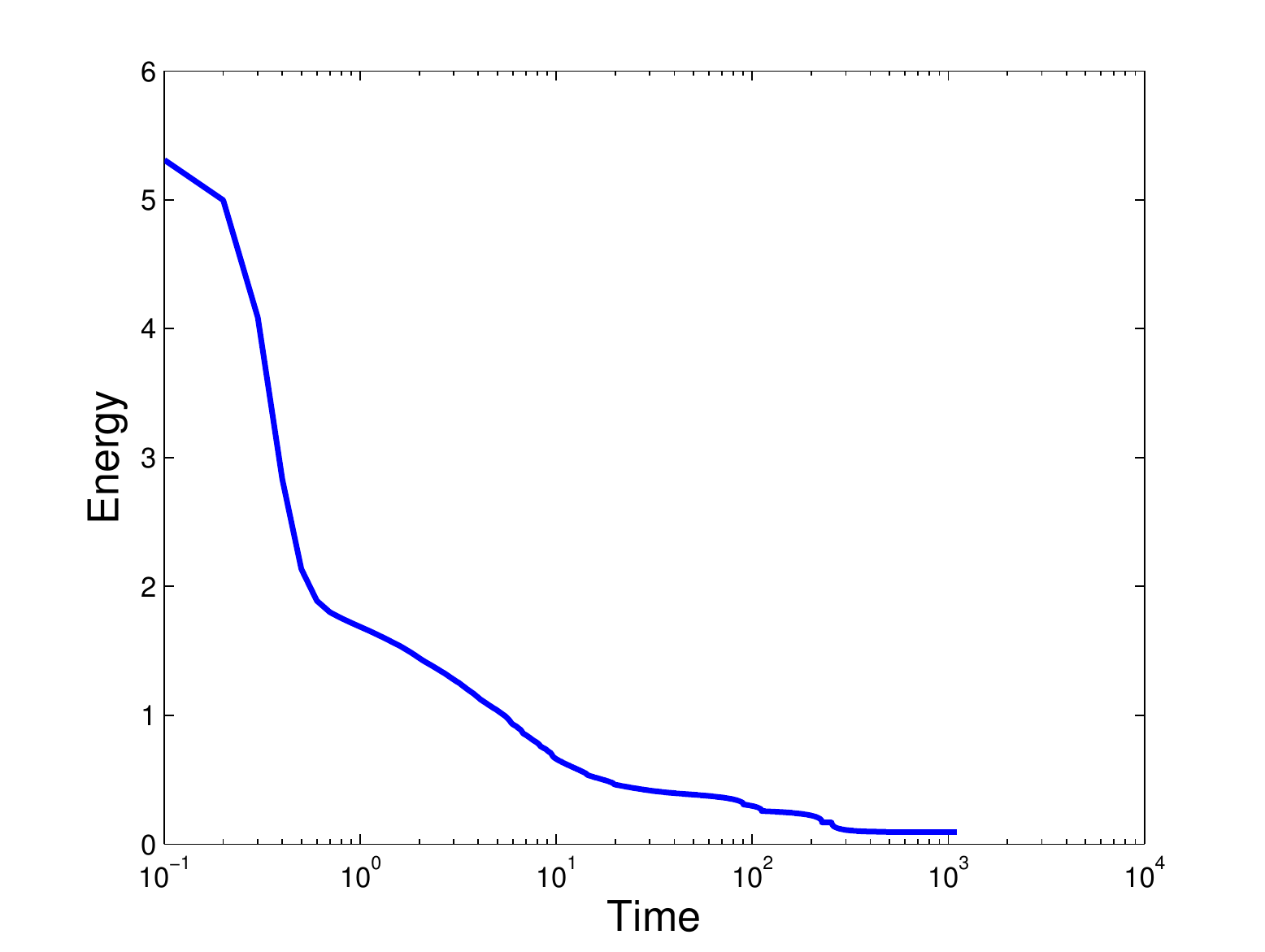} % original eps file
        \caption{Time evolution of the discrete energy $\mathcal{E}^{k}$ in the simulation of spinodal decomposition; $\tau= 0.1$.}     
        \label{EnergyEvo}  
\end{figure}

\subsection{Boundary-driven flow}
In this example, we consider a horizontal channel $\Omega=[0,2]\times [0,1]$ with the domain interface boundary $\{1\}\times[0,1]$ separating the conduit $\Omega_c=[0,1] \times [0,1]$ and the porous media $\Omega_m=[1,2] \times [0,1]$. 

The set-up of the experiment is as follows. We impose an inflow boundary condition of parabolic profile on part of the left boundary $\Gamma_{in}:=\{0\} \times [0.4, 0.6]$, i.e., $u1_c=-100a(y-0.4)(y-0.6)$ on $\Gamma_{in}$. On the right boundary $\Gamma_{out}:=\{2\} \times [0,1]$, ambient pressure is prescribed for the Darcy pressure, i.e.,  $P_m=0$ on $\Gamma_{out}$. The rest of the boundary conditions are the same as given in \eqref{IBC0}-\eqref{IBC1}. The initial condition $\ub_c|_{t=0}$ is given as the solution of the steady-state Stokes equation with the same injection boundary condition in the whole domain. An initial Darcy velocity is then  determined by using the explicit interface boundary conditions derived from Stokes fluid fields. The initial order parameter is set to be $\phi_0= -\tanh\big((0.15-\sqrt{(x-0.4)^2+(y-0.5)^2})/\sqrt{2.0\epsilon}\big)$. The initial order parameter and initial horizontal velocity are shown in Fig. \ref{IniOPVelB}.

\begin{figure}[h]
%  \ContinuedFloat
\centering
%\begin{overpic}[width=1.0\textwidth,grid,tics=10]{IniOPVelB.png}
\begin{overpic}[width=0.5\textwidth]{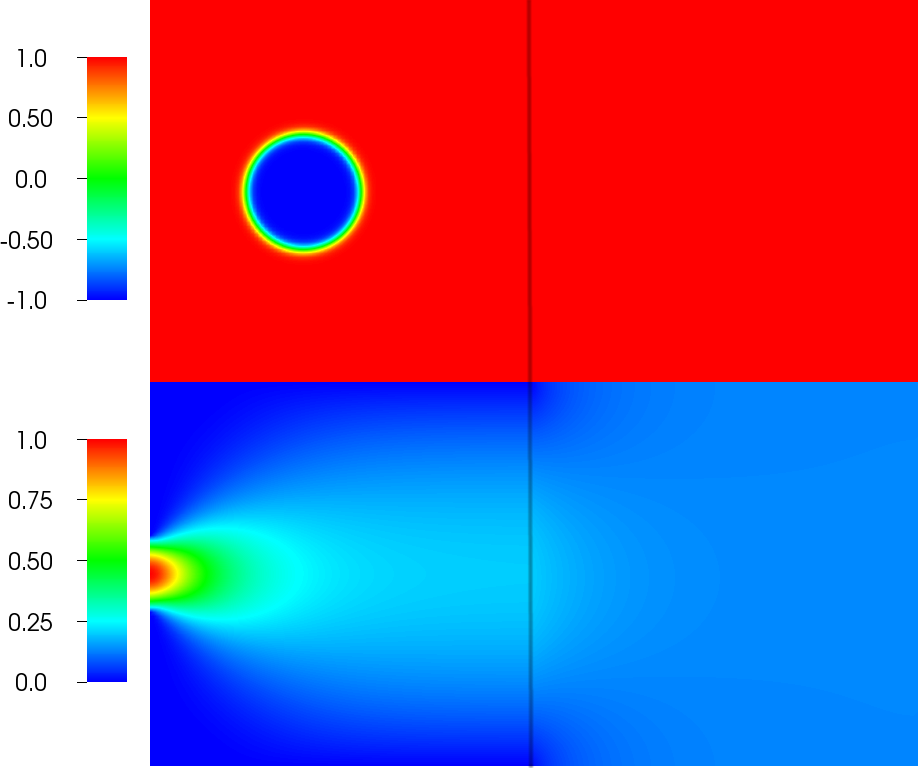}
    \put (0,80) {(a) $\phi_0$}
    \put (0,40) {(b) $u1_0$}
  \end{overpic}
  \caption{Filled contour plots of (a) the initial order parameter $\phi_0$,  and (b) the initial horizontal velocity  $u1_0$. The domain interface is at $\{1\}\times[0,1]$ separating the conduit (left) and the porous media (right). }
  \label{IniOPVelB}
\end{figure}

The parameters in this simulation are listed as $\epsilon=0.01$, $\gamma=0.001$, $\Pi=0.001$, $\nu(\varphi_c)=\nu(\varphi_m)=0.1$, $\alpha_{BJSJ}=0.1$, $a=1.0$, $M(\varphi)=\epsilon\sqrt{(1-\varphi^2)^2+\epsilon^2}$. We remark that the modified degenerate mobility function $M(\varphi)$ limits the chemical diffusion in the diffuse interface region. In the bulk of  each fluid region, the mobility is essentially $\epsilon^2$. We employ three meshes for the computation of Cahn-Hilliard equation, Darcy equation and Stokes equation, respectively, thanks in part to the complete decoupling of the three equations. The temporal time step-size is $\tau=0.001$ for accuracy.
%The resolution of the meshes is taken to be uniform $h=0.01\sqrt{2}$ in this simulation, though different mesh sizes and even adaptive mesh refinement are preferable for the computation of different dynamics. 

Fig \ref{BounDr}  shows some snapshots of the droplet passing through the domain interface under the influence of boundary-driven flow. We note that the surface tension parameter is relatively small ($\gamma=0.001$) compared to the maximum velocity on the inflow boundary $u1(0, 0.5)=1$. The round droplet quickly deforms into a cap shape with the flat side facing the injection boundary (a). A dimple is formed in (b), as the fluid velocity takes the maximum value at the center line. As it moves through the domain interface at $\{1\} \times [0,1]$, the front of the droplet (with respect to fluid flow) becomes flatter, and elongates in the vertical direction, cf. (c) and (d). This is due to conservation of mass and the fact that  the magnitude of the velocity in porous media is significantly smaller than that in conduit. Once the droplet enters the porous media, the shape remains comparatively steady. One can see that the upper and lower ``tip'' of the droplet becomes soft (compare (e) to (f)) as a result of the surface tension effect.

\begin{figure}[h!]
\centering
\begin{tabular}{cc}
\begin{overpic}[width=0.5\textwidth]{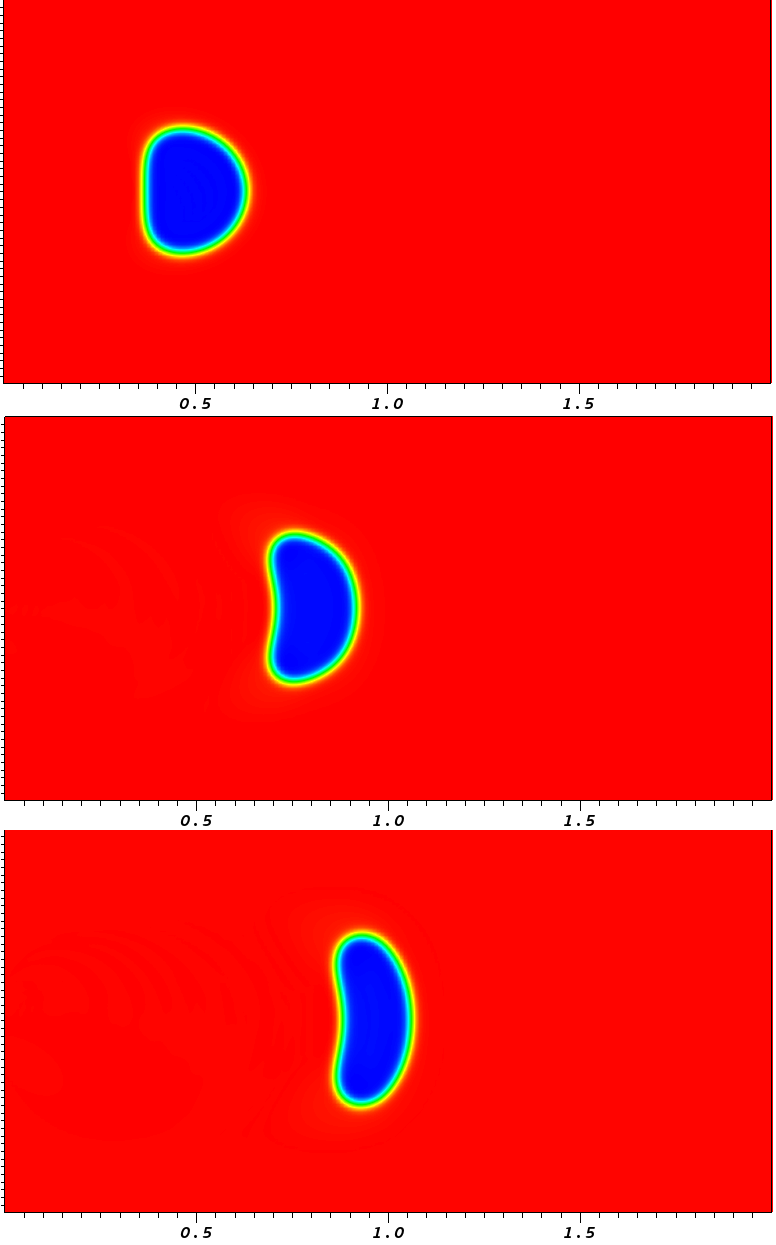}
    \put (5,72) {(a) $t=0.4$}
    \put (5,40) {(b) $t=2$}
    \put (5,10) {(c) $t=3$}
  \end{overpic}

  &
  \begin{overpic}[width=0.5\textwidth]{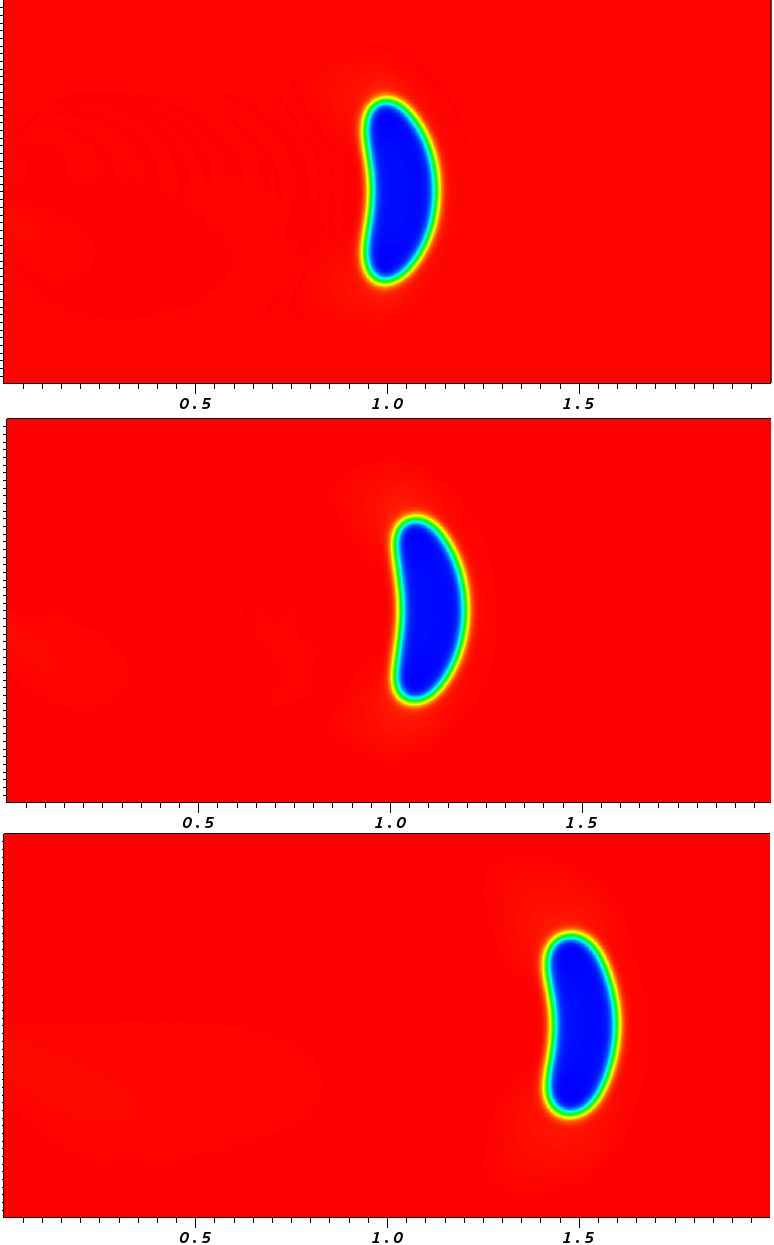}
    \put (5,72) {(d) $t=3.5$}
    \put (5,40) {(e) $t=4$}
    \put (5,10) {(f) $t=7$}
  \end{overpic}

\end{tabular}
\caption{Evolution of a droplet driven by boundary injection in a karstic domain. From top to bottom, first column, (a) $t=0.4$, (b) $t=2$, (c) $t=3$; second column, (d) $t=3.5$, (e) $t=4$, (f) $t=7$. Blue color $\phi \approx -1$  and red color $\phi \approx 1$.}

\label{BounDr}

\end{figure}

\subsection{Buoyancy-driven flow}
Here, the karst geometry is modelled by a long tube $\Omega=[0,1]\times[-1,1]$ with the conduit $\Omega_c=[0,1]\times [-1,0]$ and porous media $\Omega_m=[0,1] \times [0,1]$. The interface boundary is at $[0,1] \times \{0\}$.

In this experiment, we consider a binary system  where the densities of the two fluids are different. But the density difference is small so that a Boussinesq approximation is applicable. Specifically, a buoyancy term $G(\rho(\phi)-\bar{\rho})\hat{y}:=B(\phi-\bar{\phi})\hat{y}$ is added to Stokes equation \eqref{HSCH-NSCH1} and Darcy equation \eqref{HSCH-NSCH4}, respectively. Here $\rho(\phi)=\frac{1+\phi}{2}\rho_1+\frac{1-\phi}{2}\rho_2, $ $\bar{\rho}$ and $\bar{\phi}$ are the spatial averages of $\rho$ and $\phi$, $B=G\frac{\rho_1-\rho_2}{2}$, $\hat{y}=(0,1)^{T}$. We consider  a lighter round bubble released in an initially quiescent  heavier fluids. The boundary conditions are given in  \eqref{IBC0}-\eqref{IBC1}. Most of the parameters used in this simulation are the same as those in boundary driven flow case, except $B=2.0$ and $\Pi=0.01$.

The filled contour plots in gray scale of the rising bubble are shown in Fig. \ref{BuoyDr}. As the bubble rises in the conduit domain, it deforms into an ellipsoid. When it passes through the domain interface, one can clearly see an interface separating the bubble in conduit and in porous medium. Two corners of the bubble in some sense are formed along the domain interface with the part in the conduit being wider than that in porous medium. A tail of the bubble is seen later as it leaves the domain interface. The tail is eventually smoothed out by the surface tension effect.

\begin{figure}[h!]
\centering
\begin{tabular}{c}
\begin{overpic}[width=0.95\textwidth]{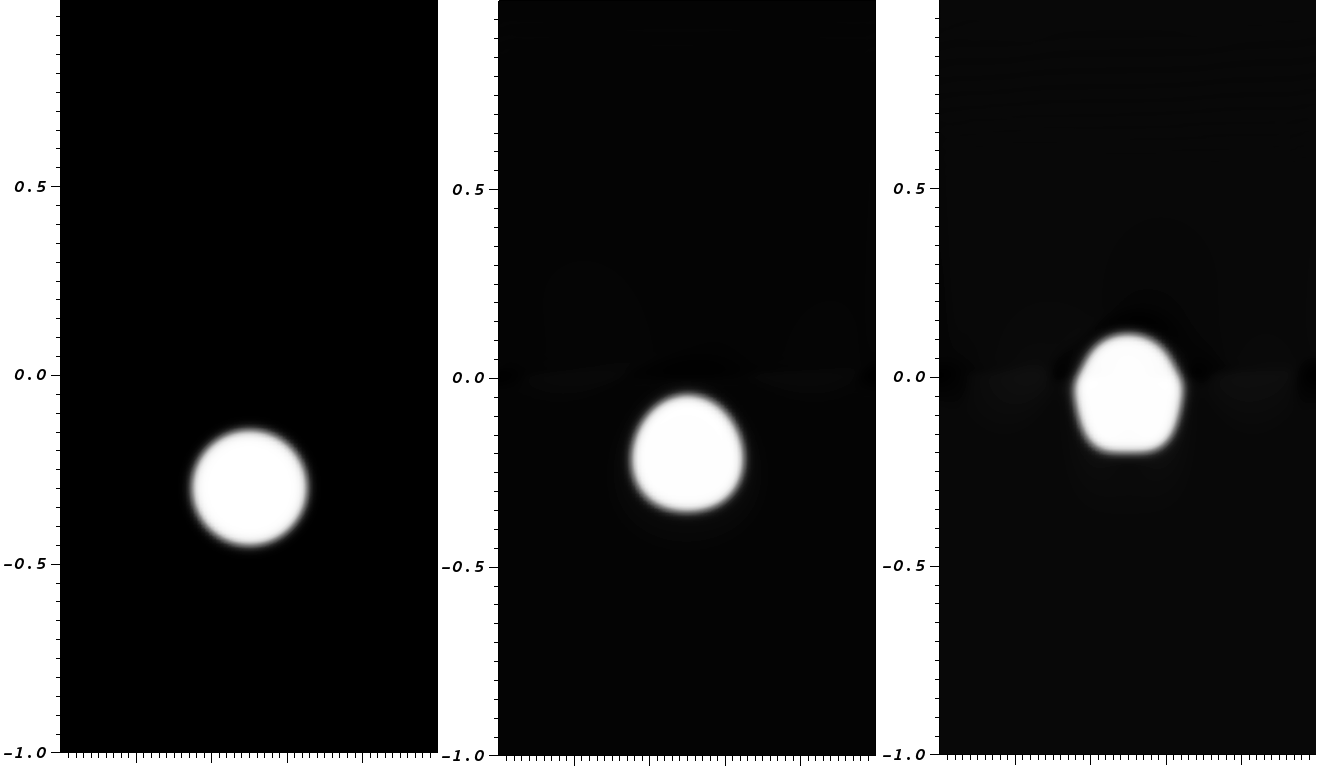}
    \put (6,58.5) {(a) $t=0$}
    \put (38,58.5) {(b) $t=0.75$}
    \put (72,58.5) {(c) $t=1.7$}
  \end{overpic}
  
  \\
  \begin{overpic}[width=0.95\textwidth]{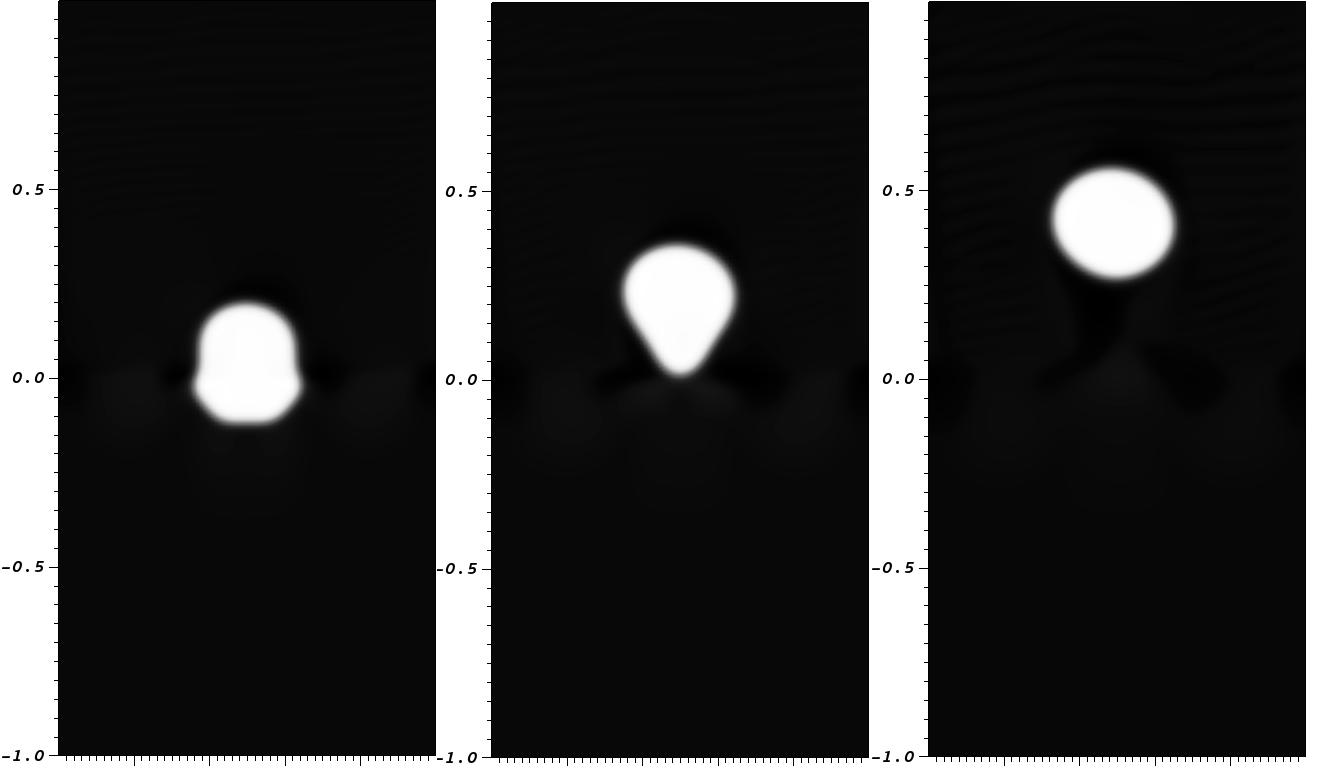}
    \put (6,-2.5) {(d) $t=1.97$}
    \put (38,-2.5) {(e) $t=3.2$}
    \put (72,-2.5) {(f) $t=4.5$}
  \end{overpic}
  
\end{tabular}
\caption{Snapshots of bubble rising due to buoyancy in a karstic domain. From left to right, first row, (a) $t=0.0$, (b) $t=0.75$, (c) $t=1.7$; second row, (d) $t=1.97$, (e) $t=3.2$, (f) $t=4.5$. White color $\phi \approx -1$  and black color $\phi \approx 1$. The domain interface is at $[0,1] \times \{0\}$}

\label{BuoyDr}

\end{figure}

\section{Conclusions}
We have proposed, analyzed and implemented two novel uniquely solvable, energy stable decoupled algorithms for the Cahn-Hilliard-Stokes-Darcy system which models two-phase flows in karstic geometry. The decoupling of the phase-field and the velocity field is realized via an intermediate velocity that takes into account the capillary force term only. Therefore, we are required to solve a strictly convex variational problem for the phase field part at each time step. The phase-field update is independent of the velocity update for both schemes (and hence decoupled). For the first scheme, the velocity field is governed by the linear Stokes-Darcy system once the phase-field is updated. For the second scheme, we further decouple the linear Stokes-Darcy system into a linear Darcy type equation and a Stokes type system. Therefore, appropriate legacy code for the Cahn-Hilliard equations, the Stokes-Darcy system, the Darcy equation and the Stokes system can be utilized.
  We have also established the unique solvability and energy stability of both algorithms rigorously. 
  So far as we know, these two schemes are the first set of decoupled uniquely solvable and energy stable algorithms for simulating two phase flows in karstic geometry. 
  Two physically interesting numerical experiments are conducted, one buoyancy driven and one boundary driven. The numerics illustrate the efficiency and the stability of the schemes.
  
  The error estimates of the schemes proposed here will be the subject of a future work.
%\section*{Acknowledgement.}
%Wang and Han acknowledge the  support of  NSF (DMS1312701) and a Planning Grant from FSU.

%%%%%%%%%%%%%%%%%%%%%%%%%%%%%%%%%%%%%%%%%%%%%%%%%%%%%%%%%%%%%%%%%%%%%%%

%\bibliographystyle{spbasic}      % basic style, author-year citations
\bibliographystyle{spmpsci}      % mathematics and physical sciences
%\bibliographystyle{spphys}       % APS-like style for physics
%\bibliography{}   % name your BibTeX data base
%\bibliographystyle{bmc-mathphys}
%\bibliography{multiphase.bib}
\bibliography{CHSD_1011.bib}

\def\cprime{$'$}
\begin{thebibliography}{10}
\providecommand{\url}[1]{{#1}}
\providecommand{\urlprefix}{URL }
\expandafter\ifx\csname urlstyle\endcsname\relax
  \providecommand{\doi}[1]{DOI~\discretionary{}{}{}#1}\else
  \providecommand{\doi}{DOI~\discretionary{}{}{}\begingroup
  \urlstyle{rm}\Url}\fi

\bibitem{BeJo1967}
Beavers, G.S., Joseph, D.D.: Boundary conditions at a naturally permeable wall.
\newblock Journal of Fluid Mechanics \textbf{30}, 197--207 (1967).
\newblock \doi{10.1017/S0022112067001375}.
\newblock
  \urlprefix\url{http://journals.cambridge.org/article\_S0022112067001375}

\bibitem{CGHW2010}
Cao, Y., Gunzburger, M., Hua, F., Wang, X.: Coupled {S}tokes-{D}arcy model with
  {B}eavers-{J}oseph interface boundary condition.
\newblock Commun. Math. Sci. \textbf{8}(1), 1--25 (2010).
\newblock
  \urlprefix\url{http://projecteuclid.org/getRecord?id=euclid.cms/1266935011}

\bibitem{CGHW2011}
Cao, Y., Gunzburger, M., Hua, F., Wang, X.: Analysis and finite element
  approximation of a coupled, continuum pipe-flow/{D}arcy model for flow in
  porous media with embedded conduits.
\newblock Numer. Methods Partial Differential Equations \textbf{27}(5),
  1242--1252 (2011).
\newblock \doi{10.1002/num.20579}.
\newblock \urlprefix\url{http://dx.doi.org/10.1002/num.20579}

\bibitem{CGR13}
Cesmelioglu, A., Girault, V., Rivi{\`e}re, B.: Time-dependent coupling of
  {N}avier-{S}tokes and {D}arcy flows.
\newblock ESAIM Math. Model. Numer. Anal. \textbf{47}(2), 539--554 (2013).
\newblock \doi{10.1051/m2an/2012034}.
\newblock \urlprefix\url{http://dx.doi.org/10.1051/m2an/2012034}

\bibitem{CR08}
{\c{C}}e{\c{s}}melio{\u{g}}lu, A., Rivi{\`e}re, B.: Analysis of time-dependent
  {N}avier-{S}tokes flow coupled with {D}arcy flow.
\newblock J. Numer. Math. \textbf{16}(4), 249--280 (2008).
\newblock \doi{10.1515/JNUM.2008.012}.
\newblock \urlprefix\url{http://dx.doi.org/10.1515/JNUM.2008.012}

\bibitem{CR12}
{\c{C}}e{\c{s}}melio{\u{g}}lu, A., Rivi{\`e}re, B.: Existence of a weak
  solution for the fully coupled {N}avier-{S}tokes/{D}arcy-transport problem.
\newblock J. Differential Equations \textbf{252}(7), 4138--4175 (2012).
\newblock \doi{10.1016/j.jde.2011.12.001}.
\newblock \urlprefix\url{http://dx.doi.org/10.1016/j.jde.2011.12.001}

\bibitem{CSW2014}
Chen, J., Sun, S., Wang, X.P.: A numerical method for a model of two-phase flow
  in a coupled free flow and porous media system.
\newblock Journal of Computational Physics \textbf{268}(0), 1 -- 16 (2014).
\newblock \doi{http://dx.doi.org/10.1016/j.jcp.2014.02.043}.
\newblock
  \urlprefix\url{http://www.sciencedirect.com/science/article/pii/S0021999114001697}

\bibitem{CGW2010}
Chen, N., Gunzburger, M., Wang, X.: Asymptotic analysis of the differences
  between the {S}tokes-{D}arcy system with different interface conditions and
  the {S}tokes-{B}rinkman system.
\newblock J. Math. Anal. Appl. \textbf{368}(2), 658--676 (2010).
\newblock \doi{10.1016/j.jmaa.2010.02.022}.
\newblock
  \urlprefix\url{http://dx.doi.org.proxy.lib.fsu.edu/10.1016/j.jmaa.2010.02.022}

\bibitem{ChGHW2011}
Chen, W., Gunzburger, M., Hua, F., Wang, X.: A parallel {R}obin-{R}obin domain
  decomposition method for the {S}tokes-{D}arcy system.
\newblock SIAM J. Numer. Anal. \textbf{49}(3), 1064--1084 (2011).
\newblock \doi{10.1137/080740556}.
\newblock \urlprefix\url{http://dx.doi.org/10.1137/080740556}

\bibitem{CGSW2013}
Chen, W., Gunzburger, M., Sun, D., Wang, X.: Efficient and long-time accurate
  second-order methods for the {S}tokes-{D}arcy system.
\newblock SIAM J. Numer. Anal. \textbf{51}(5), 2563--2584 (2013).
\newblock \doi{10.1137/120897705}.
\newblock \urlprefix\url{http://dx.doi.org/10.1137/120897705}

\bibitem{CGSW2015}
Chen, W., Gunzburger, M., Sun, D., Wang, X.: An efficient and long-time
  accurate third-order algorithm for the stokes--darcy system.
\newblock Numerische Mathematik pp. 1--23 (2015).
\newblock \doi{10.1007/s00211-015-0789-3}.
\newblock \urlprefix\url{http://dx.doi.org/10.1007/s00211-015-0789-3}

\bibitem{CR09}
Chidyagwai, P., Rivi{\`e}re, B.: On the solution of the coupled
  {N}avier-{S}tokes and {D}arcy equations.
\newblock Comput. Methods Appl. Mech. Engrg. \textbf{198}(47-48), 3806--3820
  (2009).
\newblock \doi{10.1016/j.cma.2009.08.012}.
\newblock \urlprefix\url{http://dx.doi.org/10.1016/j.cma.2009.08.012}

\bibitem{Chorin1967}
Chorin, A.J.: The numerical solution of the {N}avier-{S}tokes equations for an
  incompressible fluid.
\newblock Bull. Amer. Math. Soc. \textbf{73}, 928--931 (1967)

\bibitem{CSW2013}
Collins, C., Shen, J., Wise, S.M.: An efficient, energy stable scheme for the
  {C}ahn-{H}illiard-{B}rinkman system.
\newblock Commun. Comput. Phys. \textbf{13}(4), 929--957 (2013).
\newblock \doi{10.4208/cicp.171211.130412a}.
\newblock \urlprefix\url{http://dx.doi.org/10.4208/cicp.171211.130412a}

\bibitem{DFW2015}
Diegel, A.E., Feng, X.H., Wise, S.M.: Analysis of a mixed finite element method
  for a cahn--hilliard--darcy--stokes system.
\newblock SIAM Journal on Numerical Analysis \textbf{53}(1), 127--152 (2015).
\newblock \doi{10.1137/130950628}.
\newblock \urlprefix\url{http://dx.doi.org/10.1137/130950628}

\bibitem{DMQ2002}
Discacciati, M., Miglio, E., Quarteroni, A.: Mathematical and numerical models
  for coupling surface and groundwater flows.
\newblock Appl. Numer. Math. \textbf{43}(1-2), 57--74 (2002).
\newblock \doi{10.1016/S0168-9274(02)00125-3}.
\newblock \urlprefix\url{http://dx.doi.org/10.1016/S0168-9274(02)00125-3}.
\newblock 19th Dundee Biennial Conference on Numerical Analysis (2001)

\bibitem{DiQu2003}
Discacciati, M., Quarteroni, A.: Analysis of a domain decomposition method for
  the coupling of the stokes and darcy equations.
\newblock In: Numerical Mathematics and Advanced Applications, vol. 320, pp.
  3--20. Springer, Milan (2003)

\bibitem{DiQu2009}
Discacciati, M., Quarteroni, A.: Navier-{S}tokes/{D}arcy coupling: modeling,
  analysis, and numerical approximation.
\newblock Rev. Mat. Complut. \textbf{22}(2), 315--426 (2009).
\newblock \doi{10.5209/rev\_REMA.2009.v22.n2.16263}.
\newblock \urlprefix\url{http://dx.doi.org/10.5209/rev\_REMA.2009.v22.n2.16263}

\bibitem{Eyre1998}
Eyre, D.J.: Unconditionally gradient stable time marching the {C}ahn-{H}illiard
  equation.
\newblock In: Computational and mathematical models of microstructural
  evolution ({S}an {F}rancisco, {CA}, 1998), \emph{Mater. Res. Soc. Sympos.
  Proc.}, vol. 529, pp. 39--46. MRS, Warrendale, PA (1998).
\newblock \doi{10.1557/PROC-529-39}.
\newblock \urlprefix\url{http://dx.doi.org/10.1557/PROC-529-39}

\bibitem{GiRa1986}
Girault, V., Raviart, P.A.: Finite element methods for {N}avier-{S}tokes
  equations, \emph{Springer Series in Computational Mathematics}, vol.~5.
\newblock Springer-Verlag, Berlin (1986).
\newblock \doi{10.1007/978-3-642-61623-5}.
\newblock
  \urlprefix\url{http://dx.doi.org.proxy.lib.fsu.edu/10.1007/978-3-642-61623-5}.
\newblock Theory and algorithms

\bibitem{Grun2013}
Grün, G.: On convergent schemes for diffuse interface models for two-phase
  flow of incompressible fluids with general mass densities.
\newblock SIAM Journal on Numerical Analysis \textbf{51}(6), 3036--3061 (2013).
\newblock \doi{10.1137/130908208}.
\newblock \urlprefix\url{http://dx.doi.org/10.1137/130908208}

\bibitem{GuSh2003}
Guermond, J.L., Shen, J.: Velocity-correction projection methods for
  incompressible flows.
\newblock SIAM J. Numer. Anal. \textbf{41}(1), 112--134 (electronic) (2003).
\newblock \doi{10.1137/S0036142901395400}.
\newblock \urlprefix\url{http://dx.doi.org/10.1137/S0036142901395400}

\bibitem{GuTi2013}
Guill{\'e}n-Gonz{\'a}lez, F., Tierra, G.: On linear schemes for a
  {C}ahn-{H}illiard diffuse interface model.
\newblock J. Comput. Phys. \textbf{234}, 140--171 (2013).
\newblock \doi{10.1016/j.jcp.2012.09.020}.
\newblock \urlprefix\url{http://dx.doi.org/10.1016/j.jcp.2012.09.020}

\bibitem{GLL2014}
Guo, Z., Lin, P., Lowengrub, J.S.: A numerical method for the
  quasi-incompressible {C}ahn-{H}illiard-{N}avier-{S}tokes equations for
  variable density flows with a discrete energy law.
\newblock J. Comput. Phys. \textbf{276}, 486--507 (2014).
\newblock \doi{10.1016/j.jcp.2014.07.038}.
\newblock \urlprefix\url{http://dx.doi.org/10.1016/j.jcp.2014.07.038}

\bibitem{HSW2014}
Han, D., Sun, D., Wang, X.: Two-phase flows in karstic geometry.
\newblock Mathematical Methods in the Applied Sciences \textbf{37}(18),
  3048--3063 (2014).
\newblock \doi{10.1002/mma.3043}.
\newblock \urlprefix\url{http://dx.doi.org/10.1002/mma.3043}

\bibitem{HaWa2015}
Han, D., Wang, X.: A second order in time, uniquely solvable, unconditionally
  stable numerical scheme for {C}ahn--{H}illiard--{N}avier--{S}tokes equation.
\newblock J. Comput. Phys. \textbf{290}, 139--156 (2015).
\newblock \doi{10.1016/j.jcp.2015.02.046}.
\newblock \urlprefix\url{http://dx.doi.org/10.1016/j.jcp.2015.02.046}

\bibitem{HaWa2016}
Han, D., Wang, X.: Decoupled energy-law preserving numerical schemes for the
  {C}ahn-{H}illiard-{D}arcy system.
\newblock Numer. Methods Partial Differential Equations \textbf{32}(3),
  936--954 (2016).
\newblock \doi{10.1002/num.22036}.
\newblock \urlprefix\url{http://dx.doi.org/10.1002/num.22036}

\bibitem{HWW2014}
Han, D., Wang, X., Wu, H.: Existence and uniqueness of global weak solutions to
  a {C}ahn--{H}illiard--{S}tokes--{D}arcy system for two phase incompressible
  flows in karstic geometry.
\newblock J. Differential Equations \textbf{257}(10), 3887--3933 (2014).
\newblock \doi{10.1016/j.jde.2014.07.013}.
\newblock \urlprefix\url{http://dx.doi.org/10.1016/j.jde.2014.07.013}

\bibitem{Hecht2012}
Hecht, F.: New development in freefem++.
\newblock J. Numer. Math. \textbf{20}(3-4), 251--265 (2012)

\bibitem{HWWL2009}
Hu, Z., Wise, S.M., Wang, C., Lowengrub, J.S.: Stable and efficient
  finite-difference nonlinear-multigrid schemes for the phase field crystal
  equation.
\newblock J. Comput. Phys. \textbf{228}(15), 5323--5339 (2009).
\newblock \doi{10.1016/j.jcp.2009.04.020}.
\newblock \urlprefix\url{http://dx.doi.org/10.1016/j.jcp.2009.04.020}

\bibitem{JaMi2000}
J{\"a}ger, W., Mikeli{\'c}, A.: On the interface boundary condition of
  {B}eavers, {J}oseph, and {S}affman.
\newblock SIAM J. Appl. Math. \textbf{60}(4), 1111--1127 (electronic) (2000).
\newblock \doi{10.1137/S003613999833678X}.
\newblock \urlprefix\url{http://dx.doi.org/10.1137/S003613999833678X}

\bibitem{Jones1973}
Jones, I.P.: Low {R}eynolds number flow past a porous spherical shell.
\newblock Mathematical Proceedings of the Cambridge Philosophical Society
  \textbf{73}, 231--238 (1973).
\newblock \doi{10.1017/S0305004100047642}.
\newblock
  \urlprefix\url{http://journals.cambridge.org/article\_S0305004100047642}

\bibitem{vanKan1986}
van Kan, J.: A second-order accurate pressure-correction scheme for viscous
  incompressible flow.
\newblock SIAM J. Sci. Statist. Comput. \textbf{7}(3), 870--891 (1986).
\newblock \doi{10.1137/0907059}.
\newblock \urlprefix\url{http://dx.doi.org/10.1137/0907059}

\bibitem{KSW2008}
Kay, D., Styles, V., Welford, R.: Finite element approximation of a
  {C}ahn-{H}illiard-{N}avier-{S}tokes system.
\newblock Interfaces Free Bound. \textbf{10}(1), 15--43 (2008).
\newblock \doi{10.4171/IFB/178}.
\newblock \urlprefix\url{http://dx.doi.org/10.4171/IFB/178}

\bibitem{KKL2004}
Kim, J., Kang, K., Lowengrub, J.: Conservative multigrid methods for
  {C}ahn-{H}illiard fluids.
\newblock J. Comput. Phys. \textbf{193}(2), 511--543 (2004).
\newblock \doi{10.1016/j.jcp.2003.07.035}.
\newblock \urlprefix\url{http://dx.doi.org/10.1016/j.jcp.2003.07.035}

\bibitem{KiMo1985}
Kim, J., Moin, P.: Application of a fractional-step method to incompressible
  {N}avier-{S}tokes equations.
\newblock J. Comput. Phys. \textbf{59}(2), 308--323 (1985).
\newblock \doi{10.1016/0021-9991(85)90148-2}.
\newblock \urlprefix\url{http://dx.doi.org/10.1016/0021-9991(85)90148-2}

\bibitem{LSY2002}
Layton, W.J., Schieweck, F., Yotov, I.: Coupling fluid flow with porous media
  flow.
\newblock SIAM J. Numer. Anal. \textbf{40}(6), 2195--2218 (2003) (2002).
\newblock \doi{10.1137/S0036142901392766}.
\newblock \urlprefix\url{http://dx.doi.org/10.1137/S0036142901392766}

\bibitem{MPCMC2013}
Magaletti, F., Picano, F., Chinappi, M., Marino, L., Casciola, C.M.: The
  sharp-interface limit of the {C}ahn-{H}illiard/{N}avier-{S}tokes model for
  binary fluids.
\newblock J. Fluid Mech. \textbf{714}, 95--126 (2013).
\newblock \doi{10.1017/jfm.2012.461}.
\newblock \urlprefix\url{http://dx.doi.org/10.1017/jfm.2012.461}

\bibitem{Marchuk1974}
Marchuk, G.: 3 - \{THE\} splitting-up \{METHOD\}.
\newblock In: G.~Marchuk (ed.) Numerical Methods in Weather Prediction, pp. 84
  -- 115. Academic Press (1974).
\newblock \doi{http://dx.doi.org/10.1016/B978-0-12-470650-7.50008-6}.
\newblock
  \urlprefix\url{http://www.sciencedirect.com/science/article/pii/B9780124706507500086}

\bibitem{Minjeaud2013}
Minjeaud, S.: An unconditionally stable uncoupled scheme for a triphasic
  {C}ahn-{H}illiard/{N}avier-{S}tokes model.
\newblock Numer. Methods Partial Differential Equations \textbf{29}(2),
  584--618 (2013).
\newblock \doi{10.1002/num.21721}.
\newblock \urlprefix\url{http://dx.doi.org/10.1002/num.21721}

\bibitem{MuZh2010}
Mu, M., Zhu, X.: Decoupled schemes for a non-stationary mixed {S}tokes-{D}arcy
  model.
\newblock Math. Comp. \textbf{79}(270), 707--731 (2010).
\newblock \doi{10.1090/S0025-5718-09-02302-3}.
\newblock \urlprefix\url{http://dx.doi.org/10.1090/S0025-5718-09-02302-3}

\bibitem{Saffman1971}
Saffman, P.G.: On the boundary condition at the interface of a porous medium.
\newblock Stud. in Appl. Math. \textbf{1}, 93--101 (1971)

\bibitem{Shen1996}
Shen, J.: On error estimates of the projection methods for the
  {N}avier-{S}tokes equations: second-order schemes.
\newblock Math. Comp. \textbf{65}(215), 1039--1065 (1996).
\newblock \doi{10.1090/S0025-5718-96-00750-8}.
\newblock \urlprefix\url{http://dx.doi.org/10.1090/S0025-5718-96-00750-8}

\bibitem{Shen2012}
Shen, J.: Modeling and numerical approximation of two-phase incompressible
  flows by a phase-field approach.
\newblock In: Multiscale modeling and analysis for materials simulation,
  \emph{Lect. Notes Ser. Inst. Math. Sci. Natl. Univ. Singap.}, vol.~22, pp.
  147--195. World Sci. Publ., Hackensack, NJ (2012).
\newblock \doi{10.1142/9789814360906\_0003}.
\newblock \urlprefix\url{http://dx.doi.org/10.1142/9789814360906\_0003}

\bibitem{SWWW2012}
Shen, J., Wang, C., Wang, X., Wise, S.M.: Second-order convex splitting schemes
  for gradient flows with {E}hrlich-{S}chwoebel type energy: application to
  thin film epitaxy.
\newblock SIAM J. Numer. Anal. \textbf{50}(1), 105--125 (2012).
\newblock \doi{10.1137/110822839}.
\newblock \urlprefix\url{http://dx.doi.org/10.1137/110822839}

\bibitem{ShYa2010b}
Shen, J., Yang, X.: Numerical approximations of {A}llen-{C}ahn and
  {C}ahn-{H}illiard equations.
\newblock Discrete Contin. Dyn. Syst. \textbf{28}(4), 1669--1691 (2010).
\newblock \doi{10.3934/dcds.2010.28.1669}.
\newblock \urlprefix\url{http://dx.doi.org/10.3934/dcds.2010.28.1669}

\bibitem{ShYa2010a}
Shen, J., Yang, X.: A phase-field model and its numerical approximation for
  two-phase incompressible flows with different densities and viscosities.
\newblock SIAM J. Sci. Comput. \textbf{32}(3), 1159--1179 (2010).
\newblock \doi{10.1137/09075860X}.
\newblock \urlprefix\url{http://dx.doi.org/10.1137/09075860X}

\bibitem{ShYa2015}
Shen, J., Yang, X.: Decoupled, energy stable schemes for phase-field models of
  two-phase incompressible flows.
\newblock SIAM Journal on Numerical Analysis \textbf{53}(1), 279--296 (2015).
\newblock \doi{10.1137/140971154}.
\newblock \urlprefix\url{http://dx.doi.org/10.1137/140971154}

\bibitem{Strang1968}
Strang, G.: On the construction and comparison of difference schemes.
\newblock SIAM J. Numer. Anal. \textbf{5}, 506--517 (1968)


\bibitem{Temam1968a}
Temam, R.: Sur la stabilit\'e et la convergence de la m\'ethode des pas
  fractionnaires.
\newblock Ann. Mat. Pura Appl. (4) \textbf{79}, 191--379 (1968)

\bibitem{Temam1969}
T{\'e}mam, R.: Sur l'approximation de la solution des \'equations de
  {N}avier-{S}tokes par la m\'ethode des pas fractionnaires. {I}.
\newblock Arch. Rational Mech. Anal. \textbf{32}, 135--153 (1969)

\bibitem{Temam1969b}
T{\'e}mam, R.: Sur l'approximation de la solution des \'equations de
  {N}avier-{S}tokes par la m\'ethode des pas fractionnaires. {II}.
\newblock Arch. Rational Mech. Anal. \textbf{33}, 377--385 (1969)

\bibitem{Wang2016}
Wang, X.: Numerical algorithms for stationary statistical properties of
  dissipative dynamical systems.
\newblock Discrete and Continuous Dynamical Systems \textbf{36}(8), 4599--4618
  (2016).
\newblock \doi{10.3934/dcds.2016.36.4599}.
\newblock
  \urlprefix\url{http://aimsciences.org/journals/displayArticlesnew.jsp?paperID=12367}

\bibitem{Wise2010}
Wise, S.M.: Unconditionally stable finite difference, nonlinear multigrid
  simulation of the {C}ahn-{H}illiard-{H}ele-{S}haw system of equations.
\newblock J. Sci. Comput. \textbf{44}(1), 38--68 (2010).
\newblock \doi{10.1007/s10915-010-9363-4}.
\newblock \urlprefix\url{http://dx.doi.org/10.1007/s10915-010-9363-4}

\bibitem{Yanenko1971}
Yanenko, N.N.: The method of fractional steps. {T}he solution of problems of
  mathematical physics in several variables.
\newblock Springer-Verlag, New York-Heidelberg (1971).
\newblock Translated from the Russian by T. Cheron. English translation edited
  by M. Holt

\end{thebibliography}

\end{document}